\theoremstyle{plain}
\newtheorem{theorem}{Theorem}[section]
\newtheorem{lemma}[theorem]{Lemma}
\newtheorem{proposition}[theorem]{Proposition}
\theoremstyle{definition}
\newtheorem{remark}[theorem]{Remark}
\begin{document}
	
\title{Monotonicity of positive solutions for an indefinite logarithmic Laplacian equation}

\keywords{Logarithmic Laplacian, Monotonicity, Nonexistence, Coercive Epigraph, Direct Method of Moving Planes}
\subjclass[2010]{35R11,35B07}	

\author{Baiyu Liu and Shasha Xu}	

\address[B. Liu]{School of Mathematics and Physics\\
	University of Science and Technology Beijing \\
	30 Xueyuan Road, Haidian District
	Beijing, 100083\\
	P.R. China}
\email{liubymath@gmail.com, liuby@ustb.edu.cn}
\address[S. Xu]{School of Mathematics and Physics\\
	University of Science and Technology Beijing \\
	30 Xueyuan Road, Haidian District
	Beijing, 100083\\
	P.R. China}
\email{m202110717@xs.ustb.edu.cn}

\maketitle

\begin{abstract}
In this paper, we investigate a nonlocal equation involving the logarithmic Laplacian with indefinite nonlinearities:
\begin{equation*}
\left\{
\begin{array}{ll}
L_\Delta u(x)=a(x_n)f(u), & x\in\Omega, \\
u(x)=0,& x\in \mathbb{R}^n\backslash\Omega.
\end{array}
\right.
\end{equation*}
Here, $\Omega$ represents a Lipschitz coercive epigraph. To achieve our objectives, we develop a boundary estimate for antisymmetric functions, enabling us to establish the monotonicity and nonexistence of bounded positive solutions for the above problem using the direct method of moving planes.
\end{abstract}

\maketitle

\section{Introduction}

The logarithmic Laplacian $L_\Delta$ is a nonlocal pseudo-differential operator \cite{CLL1}, assuming the form  
\begin{equation}
	\label{def:logl}
	\begin{aligned}
		L_\Delta u(x)&=
		(-\Delta)^{L}u(x)+\rho_n u(x)\\
		&=C_nP.V.\int_{\mathbb{R}^{n}}\frac{{u(x)1_{B_{1}(x)}}(y)-u(y)}{|x-y|^n}dy+\rho_n u(x)\\
		&=C_{n} P.V.\int_{{B}_{1}(x)}{\frac{u(x)-u(y)}{|x-y|^{n}}}dy+\int_{\mathbb{R}^n\backslash{B_1(x)}}{\frac{-u(y)}{|x-y|^{n}}}dy+\rho_n u(x),
	\end{aligned}
\end{equation}
where $C_{n}:=\pi^{-n/2}\Gamma(n/2)=\frac{2}{|S^{n-1}|}$ is a normalization positive constant, $P.V.$ stands for the Cauchy principal value. 
$\rho_n=2\ln 2+\psi(n/2)-\gamma$, where $\Gamma$ is the Gamma function, $\psi=\Gamma'/\Gamma$ is the Digamma function and $\gamma=-\Gamma'(1)$ is the Euler Macheroni constant. 
Throughout the paper, we shall always require $u\in L_{0}\cap C^{1,1}_{loc}(\mathbb{R}^n)$, in which
$$
L_0:=\{u:\mathbb{R}^n\to R~|~\int_{\mathbb{R}^{n}}\frac{|u(x)|}{1+|x|^{n}}dx<+\infty\}.
$$
Then the singular integral on right hand side of \eqref{def:logl} will make sense. In this paper our aim is to study the monotonicity properties for solutions of the following problem involving logarithmic Laplacian and indefinite nonlinearities:
\begin{equation}
	\label{sec:six}
	\left\{
	\begin{array}{ll}
		L_\Delta u(x)=a(x_n)f(u), & x\in\Omega, \\
		u(x)=0,& x\in \mathbb{R}^n\backslash\Omega.
	\end{array}
	\right.
\end{equation}
Here 
\begin{equation}
	\label{def:epig}
	\Omega=\{x=(x_1,\dots,x_{n-1},x_n)=(x',x_n)\in \mathbb{R}^n \,|\, x_n>\varphi(x')\}
\end{equation} 
is a Lipschitz coercive epigraph, i.e. $\varphi: \mathbb{R}^{n-1}\to \mathbb{R}$ satisfying 
\begin{equation}
	\label{asp:varphi}
	\varphi(x')\ \textrm{is Lipschitz continuous and}
	\lim\limits_{|x'|\to\infty}\varphi(x')=+\infty.
\end{equation}
Define
$l:=\inf\limits_{x'\in \mathbb{R}^{n-1}}\varphi(x').$

In recent years, there has been a remarkable surge of interest in utilizing fractional order operators, including the fractional Laplacian, the fractional p-Laplacian, and the logarithmic Laplacian, to model a diverse array of physical phenomena \cite{Caf2010, Con2006}. This growing fascination is fueled by the profound impact of fractional calculus in various fields, driven by crucial practical applications and groundbreaking advancements in comprehending non-local phenomena. These applications span across diverse disciplines, such as conformal geometry \cite{Chang}, probability and finance \cite{Ber1996, Cab2010}, stratified materials \cite{Sav}, and numerous others.

Recall that for $s\in (0,1)$, the fractional Laplacian $(-\Delta )^s$ can be written as a singular integral operator defined by (see \cite{chenbook}).
$$
(-\Delta)^s u(x)=C_n P.V.\int_{\mathbb{R}^{n}}\frac{u(x)-u(y)}{|x-y|^{n+2s}}dy.
$$
It should be pointed out that the fractional Laplacian can also be defined equivalently through Caffarelli and Silvestre's extension method \cite{CLL4}.

The following fractional equation with indefinite nonlinearites
\begin{equation}
	\label{pro:indef1}
	(-\Delta)^s u(x)=x_1 u^p(x), \quad x\in \mathbb{R}^n
\end{equation}
has been the subject of investigation by several authors in recent years. Notably, for $1/2\leq s <1$ and $1< p < (n + 2s)/(n-2s)$, Chen and Zhu \cite{CLL7} established the nonexistence of positive bounded solutions to equation \eqref{pro:indef1} through the application of extension method.
Subsequently, Chen, Li, and Li \cite{CLL8} applied the direct method of moving planes, instead of extension method, and achieved an improved result by extending the range of $s$ from $[1/2, 1)$ to $(0, 1)$.
More recently, the method of moving planes was utilized by Chen, Li, and Zhu \cite{CLL19} to derive the nonexistence of positive solutions for the equation:
\begin{equation*}
	\label{pro:indef2}
	(-\Delta)^s u(x)=a(x_1) f(u), \quad x\in \mathbb{R}^n
\end{equation*}
with $0 <s<1$, subject to certain appropriate assumptions on $a(x_1)$ and $f(u)$.
For further literature on the methods of moving planes and their diverse applications, interested readers are referred to \cite{H, CLL20, CLL10, CLL11, CLL13, Liu2016, Dai}, and their respective references.

In recent years, significant progress has been made in studying the monotonicity of positive solutions in epigraphs, with several authors contributing to this field. Notably, Esteban and Lions \cite{Est1982} investigated the case of a coercive Lipschitz epigraph defined as in \eqref{def:epig} \eqref{asp:varphi}. Utilizing the method of moving planes, they demonstrated that the positive bounded solution of the following elliptic equation:
\begin{equation*}
	\label{sum}
	\left\{
	\begin{array}{ll}
		-\Delta u(x)=f(u), & \textrm{in} \,\, \Omega, \\
		u(x)=0,& \textrm{on} \, \, \partial \Omega.
	\end{array}
	\right.
\end{equation*}
increases monotonically with respect to $x_n$ in the domain $\Omega$.
Subsequently, Berstycki, Caffarelli, and Nirenberg \cite{C} extended the analysis under certain assumptions on $f$ and observed that the solution need not be bounded.
Dipierro \cite{Dip} generalized the monotonicity results of \cite{Est1982, C} to positive bounded non-decaying solutions for fractional elliptic equations in unbounded domains using a comprehensive version of the sliding method.
In a recent study by Chen \cite{ChenJFA2021}, the author explored bounded solutions of nonlinear equations involving the fractional $p$-Laplacian:
\begin{equation}
	\label{pro:fpL}
	\left\{
	\begin{array}{ll}
		(-\Delta)_p^s u(x)=f(u) & \textrm{in} \ \Omega, \\
		u(x)=0,& \textrm{on}\ \mathbb{R}^n\backslash \Omega
	\end{array}
	\right.
\end{equation}
Here, $\Omega$ represents an epigraph. By estimating the singular integral defining $(-\Delta)_p^s$ along a sequence of auxiliary functions at their maximum points, Chen found that the positive bounded solution of \eqref{pro:fpL} strictly increases with respect to $x_n$ in $\Omega$.
For further research on this topic, interested readers can refer to \cite{MZ}\cite{Wu2021} \cite{Peng2023}\cite{Dai} \cite{CLL22} and the references cited therein.

The logarithmic operator \eqref{def:logl} can be regarded as the first-order derivative of the fractional Laplacian, as shown in greater detail in \cite{CLL1}:
$$
(-\Delta)^s u(x)=u(x)+sL_\Delta u(x)+o(s),\quad \textrm{as}\ s\to 0^+,
$$
for $u\in C_c^2(\mathbb{R}^n)$. 
Furthermore, $L_\Delta$ has a logarithmic symbol $\mathcal{F}(L_\Delta u)(\xi)=(2\ln |\xi|)\hat{u}(\xi)$, $\forall \xi\in \mathbb{R}^n $ \cite{CLL1}, where $\mathcal{F}$ and $\hat{\cdot}$ denote the Fourier transform.
Different from the fractional Laplacian, the order of singular kernel in logarithmic Laplacian is $-n$, resulting in a lack of integrability both locally and at infinity.

Recently, there has been considerable research on topics related to the logarithmic Laplacian, including investigations into eigenvalue estimates \cite{c10}, log-Sobolev inequality \cite{c19}, semilinear problems \cite{c13, c22}, and the Cauchy problem \cite{chy2023}.
In \cite{Liu2018}, we extended the direct method of moving planes to derive symmetric properties of positive solutions for logarithmic Laplacian equations. Additionally, we investigated logarithmic Laplacian equations on unbounded domains, establishing the monotonic behavior of the solutions \cite{Liu2023}.

Inspired by the aforementioned work, it is instinctive to investigate the properties of equations involving the logarithmic Laplacian \eqref{sec:six}. When addressing issues related to the logarithmic Laplacian on an unbounded domain, a primary challenge arises from the non-integrability of the kernel at infinity. Fortunately, when applying the method of moving planes to handle problems on a coercive epigraph, we will encounter the need to work with an antisymmetric function confined to a bounded domain.

The principal results in this paper are as follows:
\begin{theorem} \label{result:min}
Let $\Omega=\{x\in \mathbb{R}^n \,|\, x_n>\varphi(x')\}$ be a Lipschitz coercive epigraph and $u \in L_0\cap C_{loc}^{1,1}(\Omega)\cap C(\bar\Omega)$ be a positive bounded solution of equation \eqref{sec:six}.
Let $I=(l,+\infty).$

Assume
\begin{itemize}
\item[(i)] $a(t)\in C(I)$ and $a(t)$ is nondecreasing in $I$;
\item[(ii)]  $a(t)>0$ for some $t\in I$ and 
\begin{equation}
	\label{ass:lim}
	\lim\limits_{h\to 0}\frac{a(l+h)}{-\ln h}\leq0;
\end{equation}
\item[$(iii)$] $f(\cdot)$ is locally Lipschitz continuous and nondecreasing in $(0, +\infty)$. Moreover, $f(u)>0$ in $(0,+\infty).$ 
\end{itemize}

Then $u$ must be  monotone increasing in $x_n$ direction in $\Omega.$
\end{theorem}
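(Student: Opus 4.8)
\section*{Proof proposal}

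The plan is to run the direct method of moving planes in the $x_n$-direction. For $\lambda\in\mathbb{R}$ write $T_\lambda=\{x_n=\lambda\}$, let $x^\lambda=(x',2\lambda-x_n)$ be the reflection of $x$ across $T_\lambda$, set $H^-=\{x_n<\lambda\}$ and $\Sigma_\lambda=\{x\in\Omega:\ x_n<\lambda\}$, and define
$$
w_\lambda(x)=u(x^\lambda)-u(x),\qquad x\in\mathbb{R}^n,
$$
which is antisymmetric, $w_\lambda(x^\lambda)=-w_\lambda(x)$. The goal reduces to proving $w_\lambda\ge0$ in $\Sigma_\lambda$ for every $\lambda$: given heights $t_1<t_2$ over a fixed $x'$ with both points in $\Omega$, taking $\lambda=(t_1+t_2)/2$ turns $w_\lambda\ge0$ into $u(x',t_2)\ge u(x',t_1)$. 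The coercivity \eqref{asp:varphi} is used decisively: since $\varphi(x')\to+\infty$, the set $\{x':\varphi(x')<\lambda\}$ is bounded, so each $\Sigma_\lambda$ is a \emph{bounded} set (and $\Sigma_\lambda=\varnothing$ for $\lambda\le l$). This is exactly the mechanism that confines the relevant antisymmetric function to a bounded domain and sidesteps the non-integrability of the kernel $|x-y|^{-n}$ at infinity.

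First I would record the equation satisfied by $w_\lambda$. Because the symbol $2\ln|\xi|$ of $L_\Delta$ is radial, $L_\Delta$ commutes with the isometry $x\mapsto x^\lambda$, so for $x\in\Sigma_\lambda$ (where both $x$ and $x^\lambda$ lie in $\Omega$)
$$
L_\Delta w_\lambda(x)=a(2\lambda-x_n)f(u(x^\lambda))-a(x_n)f(u(x)).
$$
Writing $f(u(x^\lambda))-f(u(x))=c_\lambda(x)\,w_\lambda(x)$ with $0\le c_\lambda(x)\le\mathrm{Lip}(f)$ (a difference quotient of the nondecreasing, locally Lipschitz $f$, and $u$ bounded), and using that $a$ is nondecreasing so $a(2\lambda-x_n)\ge a(x_n)$ while $f(u)>0$, this rearranges into the differential inequality
$$
L_\Delta w_\lambda(x)+c(x)\,w_\lambda(x)=\big[a(2\lambda-x_n)-a(x_n)\big]f(u(x))\ \ge\ 0\quad\text{in }\Sigma_\lambda,
$$
with $c(x)=-a(2\lambda-x_n)c_\lambda(x)$ bounded on each $\Sigma_\lambda$ for fixed $\lambda>l$ (the argument $2\lambda-x_n$ ranges in the compact interval $[\lambda,2\lambda-l]\subset I$, bounded away from $l$), the bound degenerating only as $\lambda\downarrow l$. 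For the boundary data: $w_\lambda=0$ on $T_\lambda$, and since $u\ge0$ everywhere with $u=0$ outside $\Omega$, we get $w_\lambda(x)=u(x^\lambda)\ge0$ on $H^-\setminus\Sigma_\lambda$.

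The analytic heart is a narrow-region (antisymmetric maximum) principle for $L_\Delta$, i.e. the boundary estimate promised in the abstract. Assuming $\inf_{\Sigma_\lambda}w_\lambda<0$, boundedness of $\Sigma_\lambda$ and continuity give an interior negative minimum $x^0$; I would then insert the antisymmetric reflection into the singular integral to obtain, schematically,
$$
L_\Delta w_\lambda(x^0)=C_n\,\mathrm{P.V.}\!\int_{H^-}\!\big[w_\lambda(x^0)-w_\lambda(y)\big]\Big(\tfrac{1}{|x^0-y|^n}-\tfrac{1}{|x^0-y^\lambda|^n}\Big)\,dy\ +\ w_\lambda(x^0)\,\mathcal{A}(x^0),
$$
where the first integrand is $\le0$ (a nonnegative kernel tested against a minimum) and $\mathcal{A}(x^0)$ is a positive quantity that, for the order-$(-n)$ kernel, carries a logarithmic lower bound $\mathcal{A}(x^0)\gtrsim-\ln(\operatorname{dist}(x^0,T_\lambda))$ and diverges as $\Sigma_\lambda$ narrows. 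This forces $L_\Delta w_\lambda(x^0)\le w_\lambda(x^0)\,\mathcal{A}(x^0)$ to be strongly negative, contradicting $L_\Delta w_\lambda(x^0)\ge-c(x^0)w_\lambda(x^0)$ once the logarithmic gain dominates the bounded coefficient $c$. I expect this estimate---extracting the sharp logarithmic lower bound on $\mathcal{A}$ uniformly as the slab narrows, while honestly handling the $B_1$-cutoff and the $\rho_n$ term and the cancellation of the divergences at infinity---to be the main obstacle, and it is precisely here that hypothesis (ii) enters: near the bottom the coefficient behaves like $a$ evaluated close to $l$, and $\lim_{h\to0}a(l+h)/(-\ln h)\le0$ guarantees $c=o(-\ln h)$, so the gain $\mathcal{A}$ swallows $c$ down to $\lambda\downarrow l$.

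With the principle in hand the sweep is routine. Start: for $\lambda-l$ small, $\Sigma_\lambda$ is a thin slab, so the narrow-region principle yields $w_\lambda\ge0$. Set $\lambda_0=\sup\{\mu>l:\ w_\lambda\ge0\text{ in }\Sigma_\lambda\text{ for all }l<\lambda\le\mu\}$ and suppose $\lambda_0<+\infty$. Continuity gives $w_{\lambda_0}\ge0$; a strong maximum principle for antisymmetric functions upgrades this to $w_{\lambda_0}>0$ in the interior, the alternative $w_{\lambda_0}\equiv0$ being excluded because symmetry of $u$ about a horizontal plane is incompatible with the epigraph geometry (unbounded above, empty below $l$). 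To push past $\lambda_0$, for small $\tau>0$ I would split $\Sigma_{\lambda_0+\tau}$ into a compact core, where $w_{\lambda_0}>0$ keeps $w_{\lambda_0+\tau}>0$ by continuity, and a thin collar near $\partial\Omega\cup T_{\lambda_0+\tau}$ of small measure, where the narrow-region principle again gives $w_{\lambda_0+\tau}\ge0$; this contradicts maximality. Hence $\lambda_0=+\infty$, so $w_\lambda\ge0$ for all $\lambda$, and $u$ is monotone increasing in $x_n$ (strictly, by the same strong maximum principle).
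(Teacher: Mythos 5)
Your proposal is correct in outline and coincides with the paper up to the continuation step: same moving-plane frame, same antisymmetric inequality (the paper's \eqref{sec:sev}, with your $c(x)$ playing the role of the paper's $a(x_n)M(\lambda,x)$), the same logarithmic lower bound (the paper's \eqref{est:infi}) powering the start of the sweep, and the same use of hypothesis \eqref{ass:lim} to beat the coefficient as $\lambda\searrow l$. Where you genuinely diverge is at $\lambda_0<+\infty$. The paper does \emph{not} re-run a narrow-region principle on a collar; it proves instead the Hopf-type boundary estimate of Lemma \ref{lem:two}: for minima $x^k$ of $w_{\lambda_k}$ accumulating at $x^0\in T_{\lambda_0}$, it shows $\varlimsup_{k}\delta_k^{-1}L_\Delta w_{\lambda_k}(x^k)<0$ with $\delta_k=\lambda_k-x_n^k$, where your quantity $\mathcal{A}$ contributes only a sign after division ($I_{1k}\le 0$), and the strict negativity comes from the \emph{differentiated} kernel, $\delta_k^{-1}\big(|x^k-y^{\lambda_k}|^{-n}-|x^k-y|^{-n}\big)\to -2n(\lambda_0-y_n)|\eta_0(y)|^{-(n+2)}$, tested against $w_{\lambda_0}>0$ on $C=H_{\lambda_k}\setminus B_1(x^k)$; the contradiction then uses $\nabla w_{\lambda_k}(x^k)=0$ and the mean value theorem to force $w_{\lambda_k}(x^k)/\delta_k\to 2\,\partial_{x_n}u(x^0)=0$. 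This rate analysis is unavoidable in the paper's scheme because at the touching point both $w_{\lambda_k}(x^k)$ and the undifferentiated kernel difference tend to $0$, so the fixed-scale inequality degenerates. Your core--collar route sidesteps the rates entirely: the contradiction occurs at the fixed small width $\epsilon+\tau$, since the gain $\log\frac{1}{\epsilon+\tau}$ cannot be dominated by the coefficient, which is bounded on $H_{\lambda_0+1}$ ($a\le a(\lambda_0+1)$ by monotonicity, $M$ bounded by \eqref{pro:M}). For $L_\Delta$ this works because the gain still diverges as the slab narrows, and it is arguably more elementary; what Lemma \ref{lem:two} buys instead is a sharper, reusable boundary estimate that handles degenerating minima directly.

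Two points to tighten. First, your collar must be exactly the slab $\{x\in\Omega:\lambda_0-\epsilon<x_n<\lambda_0+\tau\}$, narrow only toward $T_{\lambda_0+\tau}$: your advertised gain $\mathcal{A}\gtrsim-\ln\operatorname{dist}(x^0,T_\lambda)$ is $O(1)$ at points near the lateral boundary $\partial\Omega$ but at unit distance from the plane, so ``small measure'' is not what your sketch proves there. Fortunately no lateral collar is needed: $w_{\lambda_0}(x)=u(x^{\lambda_0})>0$ up to and including $\partial\Omega\cap\{x_n\le\lambda_0-\epsilon\}$ (the reflected point lies in $\Omega$), so the compact core may be taken all the way to the boundary and its positive infimum persists for small $\tau$ by uniform continuity; also the negative-$w$ portion of $C$ then lies in the slab and its contribution is $O\big(|w_\lambda(x^0)|(\epsilon+\tau)\big)$, negligible against the gain. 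Second, your folded identity is only schematic: the cutoff $1_{B_1(x^0)}$ breaks the clean pairing, and making the inequality honest is precisely the eight-case analysis occupying most of Lemma \ref{lem:two}; you flagged this, but it is the bulk of the work. Finally, $w_{\lambda_0}\equiv 0$ need not be excluded by geometric reasoning: since $A_{\lambda_0}\ne\emptyset$ and $u>0$ in $\Omega$, one has $w_{\lambda_0}>0$ on $A_{\lambda_0}$ automatically, which is exactly what drives the strict inequality in Lemma \ref{lem:smp}.
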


\begin{theorem}\label{result:sum}
Besides the conditions in Theorem \ref{result:min}, further assume that
$$(iv) \quad a(t)\to+\infty,\, \, \textrm{as}\, t\to+\infty.$$
Then equation \eqref{sec:six} possesses no positive bounded solution in $L_0\cap C_{loc}^{1,1}(\Omega)\cap C(\bar\Omega)$.
\end{theorem}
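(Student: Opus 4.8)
The plan is to argue by contradiction, in the spirit of the fractional result of Chen, Li and Zhu \cite{CLL19}, the new difficulty being that the logarithmic kernel $|x-y|^{-n}$ is not integrable at infinity, so one cannot pass to a limiting equation as directly. Suppose \eqref{sec:six} possesses a positive bounded solution $u\in L_0\cap C^{1,1}_{loc}(\Omega)\cap C(\bar\Omega)$. Theorem \ref{result:min} gives that $u$ is monotone increasing in the $x_n$-direction, hence for every fixed $x'$ the limit $w(x'):=\lim_{x_n\to+\infty}u(x',x_n)$ exists; it is finite because $u$ is bounded and strictly positive because $u>0$ is increasing, so that $f(w(x'))>0$ by hypothesis $(iii)$.

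Introduce the vertical translates $u_k(x):=u(x',x_n+k)$, which satisfy $L_\Delta u_k(x)=a(x_n+k)f(u_k(x))$ on the enlarging domains $\Omega_k:=\Omega-ke_n\supseteq\Omega$, with $\Omega_k\uparrow\mathbb{R}^n$. Fix any interior point $\xi\in\Omega$; then $\xi\in\Omega_k$ for all $k\ge0$, and $u_k(\xi)=u(\xi',\xi_n+k)\to w(\xi')>0$. Since $a$ is nondecreasing with $a(t)\to+\infty$ by $(iv)$, and $f$ is continuous, the right-hand side diverges:
\[
L_\Delta u_k(\xi)=a(\xi_n+k)\,f(u_k(\xi))\longrightarrow+\infty .
\]
The entire proof then reduces to contradicting this by showing $\limsup_{k\to\infty}L_\Delta u_k(\xi)<+\infty$. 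Writing $L_\Delta u_k(\xi)$ as the sum of the local principal value over $B_1(\xi)$, the tail over $\mathbb{R}^n\setminus B_1(\xi)$, and $\rho_n u_k(\xi)$, the tail $\int_{\mathbb{R}^n\setminus B_1(\xi)}\frac{-u_k(y)}{|\xi-y|^n}\,dy$ is nonpositive and may simply be discarded in an upper bound, while $\rho_n u_k(\xi)$ is bounded because $0\le u_k\le\|u\|_{L^\infty}$. Thus everything comes down to a uniform-in-$k$ upper bound for the local part $C_n\,\mathrm{P.V.}\int_{B_1(\xi)}\frac{u_k(\xi)-u_k(y)}{|\xi-y|^n}\,dy$.

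Securing this bound is the main obstacle. The difficulty is genuine: because $a(x_n+k)\to+\infty$, the source in the equation for $u_k$ blows up near $\xi$, so the standard interior regularity estimates for $L_\Delta$ yield a $C^{1,1}$ bound on the translates that degenerates with $k$, and one cannot merely pass to the limit inside the singular integral. The feature I would exploit is that monotonicity together with boundedness forces $u$ to flatten as $x_n\to+\infty$: the translates converge, $u_k\uparrow w$, to a profile independent of $x_n$, equivalently the nonnegative differences $v_k:=u_k-u$ increase to $w-u$; so the oscillation of $u_k$ over $B_1(\xi)$ is asymptotically controlled by the horizontal regularity of the limiting profile rather than by the growing coefficient. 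Turning this qualitative flattening into the required quantitative, coefficient-insensitive control of the principal value is precisely where the estimates developed earlier in the paper---in particular the boundary estimate for antisymmetric functions and the regularity it entails---must be brought to bear. Once $\limsup_{k\to\infty}L_\Delta u_k(\xi)<+\infty$ is in hand, it contradicts the divergence displayed above, and the nonexistence of positive bounded solutions follows.
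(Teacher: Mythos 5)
Your proposal has a genuine gap at precisely the step you flag as ``the main obstacle'': the uniform-in-$k$ upper bound on $C_n\,\mathrm{P.V.}\int_{B_1(\xi)}\frac{u_k(\xi)-u_k(y)}{|\xi-y|^n}\,dy$ is never proved, and the tools you invoke cannot supply it. Lemma \ref{lem:two} is an estimate for the \emph{antisymmetric} functions $w_{\lambda_k}$ at interior minimum points approaching the moving plane $T_{\lambda_0}$; it says nothing about interior regularity of the vertical translates $u_k$, and no other estimate in the paper does either. What you would actually need is a $k$-uniform Dini-type modulus of continuity (or $C^{1,1}$-type bound) for $u_k$ at $\xi$, since the kernel $|z|^{-n}$ makes the principal value sensitive to the full oscillation profile near the singularity --- and this is exactly what degenerates as the right-hand side $a(\xi_n+k)f(u_k)$ blows up, as you yourself observe. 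The ``flattening'' heuristic does not rescue this: monotone pointwise convergence $u_k\uparrow w$ carries no modulus of continuity at all (the monotone limit $w(x')$ need not even be continuous, so Dini's theorem does not apply and local uniform convergence is not automatic), and in any case monotonicity in $x_n$ controls only the \emph{vertical} oscillation of $u_k$, whereas $u_k(\xi)-u_k(y)$ for $y'\neq\xi'$ involves horizontal variation about which monotonicity is silent. So the contradiction is asserted, not derived.

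The paper avoids the limiting procedure entirely, and its device is worth adopting: since $a(t)\to+\infty$, one does not need to send $k\to\infty$ but only to place a single unit ball $B_1(Re_n)\subset\Omega$ far enough up. Monotonicity (Theorem \ref{result:min}) gives $u\geq\xi_0>0$ on $B_1(Re_n)$ with $\xi_0=\min_{B_1(R_1e_n)}u$, whence with $m_0=f(\xi_0)/\sup u$ one gets $L_\Delta u\geq a(R-1)m_0\,u\geq\lambda_1 u$ in $B_1(Re_n)$ for $R$ large, where $\lambda_1$ is the first Dirichlet eigenvalue of $L_\Delta$ in the unit ball (finite and positive by Chen--Weth \cite{CLL1}). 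Comparing $v=Mu$, with $M=\max_{B_1(Re_n)}\phi/u$, against the first eigenfunction $\phi$ via the strong maximum principle (Proposition \ref{pro:one}) forces $v>\phi$ throughout $B_1(Re_n)$, contradicting $v(x^0)=\phi(x^0)$ at the maximum point of $\phi/u$. This uses only pointwise information about $u$ on one fixed ball and the eigenvalue comparison, so no uniform regularity of translates --- the missing ingredient in your argument --- is ever required.
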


The remaining sections of this paper are structured as follows. In Section \ref{sec:lep}, we utilize the direct method of moving planes to demonstrate the monotonicity of solutions along the $x_n$-direction, as presented in Theorem \ref{result:min}. In Section \ref{sec:leq}, we focus on proving Theorem \ref{result:sum}, establishing the non-existence of positive solutions. Throughout the paper, we will employ the symbol $C$ to represent a constant, which may vary in value from one line to another.

\section{Monotonocity of solutions}\label{sec:lep}

In this section, we will establish the proof of Theorem \ref{result:min} using the direct method of moving planes. To facilitate our analysis for the remainder of the paper, we introduce essential notations and terminologies.

For each $\lambda\in (l, +\infty)$, we write $x=(x',x_n)$ with $x'=(x_1,x_2,\cdot\cdot\cdot,x_{n-1})\in \mathbb{R}^{n-1}$ and define $H_{\lambda}:=\{x\in \Omega \,|\, l<x_n<\lambda\},$ $\Sigma_{\lambda}=\{x\in \mathbb{R}^n~|x_n<\lambda\},$  $T_{\lambda}:=\{x\in \mathbb{R}^n \,|\, x_n=\lambda\}$.  For each
point $x=(x', x_n)\in  \mathbb{R}^n$, let $x^{\lambda} =(x',2\lambda-x_n)$ be the reflected point with respect to the hyperplane $T_{\lambda}.$ Define the reflected functions by $u_{\lambda}(x) = u(x^{\lambda})$ and introduce
function
\begin{equation*}
	\label{def:w} 
	w_{\lambda}(x)=u_{\lambda}(x)-u(x).
\end{equation*}

For all $u\in C_{loc}^{1,1}(\mathbb{R}^n)\cap L_0$, one can compute directly
\begin{align*}
	((-\Delta)^L u_{\lambda})(x)&=C_nP.V.\int_{\mathbb{R}^n}\frac{{ u_{\lambda}(x)1_{B_{1}(x)}}(y)- u_{\lambda}(y)}{|x-y|^n}dy\\		
	&=C_nP.V.\int_{\mathbb{R}^n}\frac{ u(x^{\lambda})1_{B_{1}(x)}(y)- u_{\lambda}(y)}{|x-y|^n}dy\\
	&=C_nP.V.\int_{\mathbb{R}^n}\frac{ u(x^{\lambda})1_{B_{1}(x)}(y^{\lambda})- u(y)}{|x-y^{\lambda}|^n}dy\\
	&=C_nP.V.\int_{\mathbb{R}^n}\frac{{ u(x^{\lambda})1_{B_{1}(x^{\lambda})}}(y)- u(y)}{|x^{\lambda}-y|^n}dy\\
	&=((-\Delta)^L u)(x^{\lambda}),
\end{align*}
where we have used that $|x^{\lambda}-y|=|x-y^{\lambda}|,$ and the reflected domain of $\mathbb{R}^n$ is still $\mathbb{R}^n.$
Thus,
\begin{equation}
	\label{eq:ulam}
	((-\Delta)^L u_{\lambda})(x)=((-\Delta)^L u)(x^{\lambda}), \quad 
	(L_\Delta u_{\lambda})(x)=(L_\Delta  u)(x^{\lambda}) .
\end{equation}

It follows from \eqref{eq:ulam} and assumption $(i), (iii)$ that for all $x\in H_\lambda$, 
\begin{equation}\label{sec:sev}
	\begin{aligned}
		L_\Delta w_{\lambda}(x)&=L_\Delta  u_{\lambda}(x)-L_\Delta u(x)\\
		&=a(2\lambda-x_n)f(u_{\lambda}(x))-a(x_n)f(u(x))\\
		&=\big(a(2\lambda-x_n)-a(x_n)\big)f(u_{\lambda}(x))+a(x_n)\big(f(u_{\lambda}(x))-f(u(x))\big)\\
		&\geq a(x_n)M(\lambda,x)w_{\lambda}(x),
	\end{aligned}
\end{equation}
where
\begin{equation*}\label{2.2}
	M(\lambda,x) = \frac{f(u_{\lambda}(x))-f(u(x))}{u_{\lambda}(x)-u(x)}.
\end{equation*}
When $u(x)$ is bounded and $f(\cdot)$ is locally Lipschitz continuous and nondecreasing in $(0,+\infty)$, we have
\begin{equation}\label{pro:M}
	M(\lambda,x) \ \textrm{is bounded and nonnegative in}\ H_\lambda.
\end{equation}

Let $\widetilde \Omega=\{x^\lambda| x\in \Omega\}$ be the reflected domain of $\Omega$ with respect to the hyperplane $T_\lambda$. 
Denote 
$$A_{\lambda}=\widetilde \Omega\backslash\Omega,\,\,\,
D_{\lambda}=\Sigma_{\lambda}\backslash \widetilde \Omega.$$
Clearly, $\Sigma_{\lambda}=\overline{A_{\lambda}}\cup H_\lambda\cup D_{\lambda}$.

We now present the following maximum principle and boundary estimate lemma, crucial tools that will be utilized throughout this paper.

\begin{lemma}[Strong maximum principle for antisymmetric functions]\label{lem:smp}
	Let $\Omega=\{x\in \mathbb{R}^n \,|\, x_n>\varphi(x')\}$ be a Lipschitz coercive epigraph. Given $l<\lambda<+\infty$, let $w_{\lambda}(x)\in L_0\cap C_{loc}^{1,1}(\Omega)\cap C(\overline{\Sigma_\lambda})$ satisfy
	\begin{equation}
		\label{sec}
		\left\{
		\begin{array}{lc}
			L_\Delta w_{\lambda}(x)\geq a(x_n)M(\lambda,x)w_{\lambda}(x), & x\in H_\lambda, \\
			w_{\lambda}(x)=-w(x),& x\in \Sigma_{\lambda},\\
			w_{\lambda}(x) > 0, & x\in A_\lambda,\\
			w_{\lambda}(x) = 0, & x\in D_\lambda.\\
		\end{array}
		\right.
	\end{equation}
	Suppose $w_{\lambda}(x)\geq 0$ in $H_{\lambda}$, then $w_{\lambda}(x) > 0$, for all $x\in H_{\lambda}$.
\end{lemma}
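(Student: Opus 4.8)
The plan is to argue by contradiction. Suppose $w_{\lambda}\geq 0$ in $H_{\lambda}$ but $w_{\lambda}(x^0)=0$ at some $x^0\in H_{\lambda}$. Since $\Omega$ is open, $H_{\lambda}$ is open, so $x^0$ is an interior point; moreover it is a global minimum of $w_{\lambda}$ over $\Sigma_{\lambda}$, because $\Sigma_{\lambda}=\overline{A_{\lambda}}\cup H_{\lambda}\cup D_{\lambda}$ and $w_{\lambda}$ is strictly positive on $A_{\lambda}$, nonnegative on $H_{\lambda}$, and zero on $D_{\lambda}$. In particular $w_{\lambda}\geq 0$ throughout $\Sigma_{\lambda}$. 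The strategy is to evaluate $L_\Delta w_{\lambda}(x^0)$ via its singular-integral representation, show the sign conditions force $L_\Delta w_{\lambda}(x^0)\leq 0$, compare with the differential inequality which forces $L_\Delta w_{\lambda}(x^0)\geq 0$, and conclude that $w_{\lambda}$ vanishes identically on $\Sigma_{\lambda}$, contradicting $w_{\lambda}>0$ on $A_{\lambda}$.

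Because $w_{\lambda}(x^0)=0$, both the indicator term and the $\rho_n$ term drop, so I would first write $L_\Delta w_{\lambda}(x^0)=-C_n\,P.V.\int_{\mathbb{R}^n}\frac{w_{\lambda}(y)}{|x^0-y|^n}\,dy$. As $x^0$ is an interior minimum and $w_{\lambda}\in C^{1,1}_{loc}(\Omega)$, we have $\nabla w_{\lambda}(x^0)=0$ and $w_{\lambda}(y)=O(|y-x^0|^2)$ near $x^0$, so the integrand is $O(|y-x^0|^{2-n})$ and the singularity is genuinely integrable, while the $L_0$ condition controls the tail. Next I would exploit the antisymmetry $w_{\lambda}(y^{\lambda})=-w_{\lambda}(y)$ together with $|x^0-y^{\lambda}|=|(x^0)^{\lambda}-y|$ to fold the contribution of $\mathbb{R}^n\setminus\Sigma_{\lambda}=\{y_n>\lambda\}$ back onto $\Sigma_{\lambda}$, obtaining
\[
L_\Delta w_{\lambda}(x^0)=C_n\,P.V.\int_{\Sigma_{\lambda}}w_{\lambda}(y)\left(\frac{1}{|(x^0)^{\lambda}-y|^n}-\frac{1}{|x^0-y|^n}\right)dy.
\]
Note that $(x^0)^{\lambda}\notin\Sigma_{\lambda}$, so the only singularity of the folded kernel is the one at $y=x^0$ already shown to be integrable.

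The key geometric observation is that for $x^0,y\in\Sigma_{\lambda}$ one has $|(x^0)^{\lambda}-y|>|x^0-y|$: writing $a=\lambda-x^0_n>0$ and $b=\lambda-y_n>0$, the last-coordinate gaps satisfy $(a+b)^2-(b-a)^2=4ab>0$, so the kernel difference is strictly negative a.e. on $\Sigma_{\lambda}$. Since $w_{\lambda}\geq 0$ there, the integrand is $\leq 0$, giving $L_\Delta w_{\lambda}(x^0)\leq 0$; combined with absolute convergence this is an honest nonpositive integral. On the other hand, the differential inequality at $x^0$ yields $L_\Delta w_{\lambda}(x^0)\geq a(x^0_n)M(\lambda,x^0)w_{\lambda}(x^0)=0$. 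Hence $L_\Delta w_{\lambda}(x^0)=0$, and an integral of a nonpositive integrand with a strictly negative kernel factor can vanish only if $w_{\lambda}(y)=0$ for a.e.\ $y\in\Sigma_{\lambda}$. In particular $w_{\lambda}\equiv 0$ on $A_{\lambda}$, which contradicts $w_{\lambda}>0$ on $A_{\lambda}$; here the coercivity of $\Omega$ (via $\lim_{|x'|\to\infty}\varphi(x')=+\infty$ and $\lambda>l$) guarantees $A_{\lambda}$ has positive measure. This contradiction forces $w_{\lambda}>0$ throughout $H_{\lambda}$.

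The main obstacle I anticipate is not the sign analysis, which is clean, but the rigorous bookkeeping of the singular integral under the weak hypothesis $w_{\lambda}\in L_0\cap C^{1,1}_{loc}$. Concretely, one must justify that the principal value at $x^0$ converges (using the interior-minimum condition $\nabla w_{\lambda}(x^0)=0$ and the $C^{1,1}$ quadratic bound, so that locally the integrand is $O(|y-x^0|^{2-n})$ and the $P.V.$ is in fact an absolutely convergent integral), that folding across $T_{\lambda}$ and splitting into $\Sigma_{\lambda}$ and its reflection is legitimate, and that the tail converges; the latter follows from the $L_0$ condition together with the extra decay $O(|y|^{-n-1})$ of the kernel difference at infinity. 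Once these justifications are in place, the equality $L_\Delta w_{\lambda}(x^0)=0$ rigorously propagates to $w_{\lambda}\equiv 0$ on $\Sigma_{\lambda}$ and the contradiction is immediate.
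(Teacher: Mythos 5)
Your proof is correct and follows essentially the same route as the paper's: assume an interior zero $x^0\in H_\lambda$, use $w_\lambda(x^0)=0$ to reduce $L_\Delta w_\lambda(x^0)$ to the pure singular integral, fold $\mathbb{R}^n\setminus\Sigma_\lambda$ back onto $\Sigma_\lambda$ by antisymmetry, and exploit $|x^0-y^{\lambda}|>|x^0-y|$ together with the signs of $w_\lambda$ on $H_\lambda$, $A_\lambda$, $D_\lambda$. The only cosmetic difference is that the paper obtains the strict inequality $L_\Delta w_\lambda(x^0)<0$ directly from the $A_\lambda$ contribution (its term $II$), whereas you first deduce $L_\Delta w_\lambda(x^0)=0$ and then force $w_\lambda\equiv 0$ a.e.\ on $\Sigma_\lambda$, contradicting $w_\lambda>0$ on $A_\lambda$ --- logically the same mechanism, with your extra care about the principal value and tail convergence being a sound supplement rather than a deviation.
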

\begin{proof}
	Assume for contradiction that there is some $x^0\in H_{\lambda}$
	such that
	$w_{\lambda} (x^0)  = 0$.
	It follows from \eqref{sec}, that
	\begin{equation}\label{sec:ten}
		L_\Delta w_{\lambda}(x^0)\geq a(x_n^0)M(\lambda,x^0)w_{\lambda}(x^0)= 0.
	\end{equation}
	
	On the other hand, we have
	\begin{equation}\label{est:delw}
		\begin{aligned}
			L_\Delta w_{\lambda}(x^0)&= (-\Delta)^{L}w_{\lambda}(x^0)\\
			&=C_nP.V.\int_{\mathbb{R}^n}\frac{{w_{\lambda}(x^0)1_{B_{1}(x^0)}}(y)-w_{\lambda}(y)}{|x^0-y|^n}dy\\
			&=C_nP.V.\int_{\mathbb{R}^n}\frac{-w_{\lambda}(y)}{|x^0-y|^n}dy\\
			&=C_n(I+II+III),
		\end{aligned}
	\end{equation}
	in which
	$$
	I=P.V.\int_{H_{\lambda}\cup \widetilde{H}_{\lambda}}\frac{-w_{\lambda}(y)}{|x^0-y|^n}dy,\ 
	II=\int_{A_{\lambda}\cup\widetilde{A}_{\lambda}}\frac{-w_{\lambda}(y)}{|x^0-y|^n}dy,\ 
	III=\int_{D_{\lambda}\cup\widetilde{D}_{\lambda}}\frac{-w_{\lambda}(y)}{|x^0-y|^n}dy.
	$$
	By a straight computation, we get
	\begin{equation*}
		\begin{aligned}
			I 
			&=P.V.\int_{H_{\lambda}}\frac{-w_{\lambda}(y)}{|x^0-y|^n}dy+P.V.\int_{\widetilde{H}_{\lambda}}\frac{-w_{\lambda}(y)}{|x^0-y|^n}dy\\
			&
			=P.V.\int_{H_{\lambda}}\frac{-w_{\lambda}(y)}{|x^0-y|^n}dy+P.V.\int_{{H}_{\lambda}}\frac{-w_{\lambda}(y^{\lambda})}{|x^0-y^{\lambda}|^n}dy\\
			&=P.V.\int_{H_{\lambda}}\bigg(\frac{1}{|x^0-y^{\lambda}|^n}-\frac{1}{|x^0-y|^n}\bigg)w_{\lambda}(y)dy.
		\end{aligned}
	\end{equation*}
	Taking $|x^0-y^{\lambda}|>|x^0-y|$, $\forall y\in \Sigma_\lambda$ and $w_{\lambda}(y)\geq0$, $\forall y\in H_{\lambda}$ into consideration, we derive 
	\begin{equation}
		\label{est:I}
		I\leq 0.
	\end{equation}
	
	Similarly, since  $w_{\lambda}(y)>0$, $\forall y\in A_{\lambda}$,
	\begin{equation}
		\label{est:II}
		II=\int_{A_{\lambda}}\bigg(\frac{1}{|x^0-y^{\lambda}|^n}-\frac{1}{|x^0-y|^n}\bigg)w_{\lambda}(y)dy<0.
	\end{equation}	
	Also, by using the fact that $w_{\lambda}(x) = 0$, $\forall x\in D_\lambda$, there holds	
	\begin{equation}
		\label{est:III}
		III=\int_{D_{\lambda}}\bigg(\frac{1}{|x^0-y^{\lambda}|^n}-\frac{1}{|x^0-y|^n}\bigg)w_{\lambda}(y)dy=0.
	\end{equation}
	
	Consequently, putting \eqref{est:I} \eqref{est:II} \eqref{est:III} into \eqref{est:delw} we derive
	\begin{equation*} 
	L_\Delta w_{\lambda}(x^0)<0.
	\end{equation*}
	which contradicts with \eqref{sec:ten}.
\end{proof}

\begin{lemma} [A boundary estimate  for antisymmetric functions]\label{lem:two}
For some fixed $\lambda_0>l,$ assume $w_{\lambda_0}(x)>0,$ for $x\in H_{\lambda_0}.$ Suppose there are $\lambda_k\searrow\lambda_0,$ and $x^k\in H_{\lambda_k},$ such that 
\begin{equation*}
	w_{\lambda_k}(x^k)=\min\limits_{x\in H_{\lambda_k}}w_{\lambda_k}(x)<0,\, \textrm{and} \, x^k\to x^0\in\partial\Sigma_{\lambda_0}, \textrm{as}\ k\to \infty.
\end{equation*}
Let $\delta_k=dist(x^k,\partial\Sigma_{\lambda_k})\equiv|\lambda_k-x_n^k|.$
Then \begin{equation*}\label{con:delw}
	\overline{\lim\limits_{\delta_k\to 0}}\frac{(-\Delta)^Lw_{\lambda_k}(x^k)}{\delta_k}<0,
	\quad
	\overline{\lim\limits_{\delta_k\to 0}}\frac{L_\Delta w_{\lambda_k}(x^k)}{\delta_k}<0,.
\end{equation*}
\end{lemma}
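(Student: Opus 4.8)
The plan is to fold $(-\Delta)^L w_{\lambda_k}(x^k)$ onto the half–space $\Sigma_{\lambda_k}$ by antisymmetry and reduce it to a single, absolutely convergent, sign–definite integral plus a harmless remainder. Write $\lambda=\lambda_k$, $m_k=w_{\lambda_k}(x^k)$, let $\bar x^k\in T_{\lambda_k}$ be the orthogonal projection of $x^k$, and set $P(y)=|x^k-y|^{-n}$, $Q(y)=|x^k-y^{\lambda}|^{-n}$, $G=P-Q$. Substituting $y\mapsto y^{\lambda}$ on $\Sigma_\lambda^c$ and using $w_\lambda(y^\lambda)=-w_\lambda(y)$, exactly as in Lemma~\ref{lem:smp}, gives
\begin{equation*}
  (-\Delta)^L w_\lambda(x^k)=C_n\,\mathrm{P.V.}\!\int_{\Sigma_\lambda}\Big[m_k\big(1_{B_1(x^k)}(y)\,P+1_{B_1(x^k)}(y^\lambda)\,Q\big)-w_\lambda(y)\,G(y)\Big]dy.
\end{equation*}
Adding and subtracting $m_kG$ splits this as $(-\Delta)^L w_\lambda(x^k)=J+C_n m_k S$ with
\begin{equation*}
  J=C_n\!\int_{\Sigma_\lambda}\!\big(m_k-w_\lambda(y)\big)G(y)\,dy,\qquad S=\int_{\Sigma_\lambda}\!\big(1_{B_1(x^k)}(y)\,P+1_{B_1(x^k)}(y^\lambda)\,Q-G\big)dy.
\end{equation*}
Here $J$ is absolutely convergent: $|x^k-(x^k)^{\lambda}|=2\delta_k$ forces $0<G(y)\lesssim\delta_k|y|^{-n-1}$ for large $|y|$, while near $x^k$ the interior minimality of $x^k$ (so $\nabla w_\lambda(x^k)=0$) together with $C^{1,1}$ regularity yields $m_k-w_\lambda(y)=O(|x^k-y|^2)$, which absorbs the singularity $G\sim P$.

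The decisive point is that $J\le -c\,\delta_k$. Since $m_k=\min_{H_\lambda}w_\lambda$ and, by the sign conditions in \eqref{sec}, $w_\lambda>0$ on $A_\lambda$ and $w_\lambda=0$ on $D_\lambda$, we have $w_\lambda(y)\ge m_k$ on all of $\Sigma_\lambda$; as $G>0$ there, the integrand of $J$ is nonpositive, so $J\le C_n\int_E(m_k-w_\lambda)G$ for any $E\subset\Sigma_\lambda$. I would fix a closed ball $E\subset H_{\lambda_0}$, on which $w_{\lambda_0}\ge c_0>0$; by continuity of $u$ one has $w_{\lambda_k}\to w_{\lambda_0}$ uniformly on $E$, hence $w_{\lambda_k}\ge c_0/2$ for $k$ large. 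Writing $p=\bar x^k-y$, so that $x^k-y=p-\delta_k e_n$ and $(x^k)^{\lambda}-y=p+\delta_k e_n$, the mean value theorem gives $G(y)=|p-\delta_k e_n|^{-n}-|p+\delta_k e_n|^{-n}\ge c_1\delta_k$ on $E$, where $|p|$ is bounded and $p_n\ge a_0>0$. Therefore $J\le -C_n\tfrac{c_0}{2}c_1|E|\,\delta_k=:-c_2\delta_k$.

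It remains to control $C_n m_k S$, and the cleanest route is by sign. On $B_1(x^k)\cap\Sigma_\lambda$ the integrand of $S$ equals $(1+1_{B_1(x^k)}(y^\lambda))Q\ge0$ and is bounded below in $L^1$ by a fixed positive constant (for $y$ with $|x^k-y|\sim\tfrac12$, $Q$ is bounded below on a set of fixed measure), whereas the remaining contribution is $\ge-\int_{\Sigma_\lambda\setminus B_1(x^k)}G=-O(\delta_k)$; hence $S>0$ for all large $k$. As $m_k<0$ and $C_n>0$, this gives $C_n m_k S\le0$, so $(-\Delta)^L w_{\lambda_k}(x^k)\le J\le -c_2\delta_k$, which proves the first inequality with $\overline{\lim}\le-c_2<0$. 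For the second, $L_\Delta=(-\Delta)^L+\rho_n\,\mathrm{Id}$ gives $L_\Delta w_{\lambda_k}(x^k)=(-\Delta)^L w_{\lambda_k}(x^k)+\rho_n m_k$; when $\rho_n\ge0$ the extra term is $\le0$, and in general $\nabla w_\lambda(x^k)=0$ with $C^{1,1}$ regularity yields $m_k=o(\delta_k)$, so $\rho_n m_k/\delta_k\to0$ and the same conclusion holds.

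The main obstacle I anticipate is uniformity in $k$. The convergence $w_{\lambda_k}\to w_{\lambda_0}$ and the lower bound $G\ge c_1\delta_k$ on the fixed set $E$ are routine, given continuity of $u$ and the explicit expansion of $G$. More delicate is the estimate $m_k=o(\delta_k)$ required for the $L_\Delta$ statement when $\rho_n<0$: it relies on a modulus of continuity for $\nabla w_{\lambda_k}$ near $x^k$ that stays uniform as $k\to\infty$, which is immediate when $x^0$ lies in the interior of $\Omega$ but needs separate care in the corner case $x^0\in\partial\Omega\cap T_{\lambda_0}$, where interior regularity constants can degenerate.
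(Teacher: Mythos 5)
Your argument is correct, and it reaches the conclusion by a genuinely different route than the paper. The paper also folds the integral onto $\Sigma_{\lambda_k}$ by antisymmetry, but it then splits $\Sigma_{\lambda_k}$ into $H_{\lambda_k}$, $A_{\lambda_k}$, $D_{\lambda_k}$ and their intersections with $B_1(x^k)$ and $B_1(\widetilde{x}^k)$, and runs through eight sub-cases according to whether $\lambda_0-l$ is less than, greater than, or equal to $1$ and which of the pieces $A,C,D_1,D_2,E_1,E_2$ are empty; in each case it lands on the bound $(-\Delta)^L w_{\lambda_k}(x^k)\le C_n(I_{1k}+I_{2k})$, where $I_{1k}=w_{\lambda_k}(x^k)\int_{\Sigma_{\lambda_k}\cap B_1(x^k)}|x^k-y^{\lambda_k}|^{-n}dy\le 0$ and the strict negativity is extracted from $I_{2k}$, an integral over the $k$-dependent set $C=H_{\lambda_k}\setminus B_1(x^k)$, via the mean value theorem applied to the kernel difference and the convergence $w_{\lambda_k}\to w_{\lambda_0}>0$. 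Your single decomposition $J+C_nm_kS$, with the indicator mismatch swept into the sign-definite remainder $S$ (your verification that $S\ge c-O(\delta_k)>0$ is sound, since on $\Sigma_\lambda$ one has $B_1(\widetilde{x}^k)\cap\Sigma_\lambda\subset B_1(x^k)$, so outside $B_1(x^k)$ the integrand of $S$ is exactly $-G$), collapses the entire case analysis. Moreover, extracting the quantitative bound $J\le-c_2\delta_k$ from a \emph{fixed} compact $E\subset H_{\lambda_0}$, rather than from the moving set $C$, buys you two things: the passage to the limit is cleaner than the paper's pointwise statement on the $k$-dependent set $C$, and your estimate applies uniformly even in the sub-cases where $C=\emptyset$ (the paper's Cases 1.2, 1.3 and 3.2), in which the paper's displayed bound retains only the term $I_{1k}$ and, as written, delivers only $\overline{\lim}\,\le 0$ rather than the strict inequality — so your route is in fact more robust at exactly the delicate configurations. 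One simplification for the second inequality: in the paper's setting $n\ge 2$ (forced by the epigraph hypothesis $\varphi:\mathbb{R}^{n-1}\to\mathbb{R}$), one has $\rho_n=2\ln 2+\psi(n/2)-\gamma>0$ — a fact the paper invokes explicitly in Proposition \ref{pro:one} — so $\rho_n m_k<0$ and $L_\Delta w_{\lambda_k}(x^k)<(-\Delta)^L w_{\lambda_k}(x^k)$ immediately, which is exactly the paper's one-line deduction; your fallback $m_k=o(\delta_k)$, and the attendant worry about degenerating $C^{1,1}$ constants at a corner point $x^0\in\partial\Omega\cap T_{\lambda_0}$, is never needed (indeed $\rho_1=-2\gamma<0$, but $n=1$ is excluded here).
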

\begin{proof}
By a similar computation as in \eqref{est:delw}, we have 
\begin{equation}\label{eq:bdest}
	\begin{aligned}
		(-\Delta)^{L}w_{\lambda_k}(x^k)&=C_nP.V.\int_{\mathbb{R}^n}\frac{{w_{\lambda_k}(x^k)1_{B_{1}(x^k)}}(y)-w_{\lambda_k}(y)}{|x^k-y|^n}dy
		=C_n(I+II+III),
	\end{aligned}
\end{equation}
where 
\begin{eqnarray*}
I&=&P.V.\int_{H_{\lambda_k}\cup \widetilde{H}_{\lambda_k}}\frac{{w_{\lambda_k}(x^k)1_{B_{1}(x^k)}}(y)-w_{\lambda_k}(y)}{|x^k-y|^n}dy,\\
II&=&\int_{A_{\lambda_k}\cup\widetilde{A}_{\lambda_k}}\frac{{w_{\lambda_k}(x^k)1_{B_{1}(x^k)}}(y)-w_{\lambda_k}(y)}{|x^k-y|^n}dy,
\\
III&=&\int_{D_{\lambda_k}\cup \widetilde{D}_{\lambda_k}}\frac{{w_{\lambda_k}(x^k)1_{B_{1}(x^k)}}(y)-w_{\lambda_k}(y)}{|x^k-y|^n}dy. 
\end{eqnarray*}

We recall that $H_{\lambda_k}:=\{x\in \Omega \,|\, l<x_n<\lambda_k\}$, $A_{\lambda_k}=\widetilde \Omega\backslash\Omega$,
$D_{\lambda_k}=\Sigma_{\lambda_k}\backslash \widetilde \Omega$, 
and denote 
\begin{equation*} 
	\begin{array}{rl}
		\widetilde{x}^k=(x^k)^{\lambda_k},
		&A=\big(B_1(x^k)\cap H_{\lambda_k}\big)\backslash B_1(\widetilde{x}^k),\\
		B=B_1(\widetilde{x}^k)\cap H_{\lambda_k},
		&C=H_{\lambda_k}\backslash B_1(x^k),\\
		D_1= B_1(\widetilde{x}^k)\cap D_{\lambda_k},
		&D_2=\big(B_1(x^k)\cap D_{\lambda_k}\big)\backslash B_1(\widetilde{x}^k)\\
		E_1=B_1(x^k)\cap A_{\lambda_k}\cap B_1(\widetilde{x}^k),
		&E_2=\big(B_1(x^k)\cap A_{\lambda_k}\big)\backslash B_1(\widetilde{x}^k).
	\end{array}
\end{equation*}

Clearly,	$$E_1\cup E_2=B_1({x^k})\cap A_{\lambda_k},
A\cup B=B_1(x^k)\cap H_{\lambda_k},
D_1\cup D_2=B_1(x^k)\cap D_{\lambda_k}.$$

Since $\lambda_k\searrow\lambda_0,$ and $x^k\to x^0\in T_{\lambda_0}$, we know that for each sufficiently small $h>0$, there is $K>0$ such that when $k>K$, 
\begin{equation}
	\label{con:slk}
	0<\lambda_k-\lambda_0<\frac{h}{2}\quad  \textrm{and}\quad |x^k-x^0|<\frac{h}{2}.
\end{equation}

Thus, the estimate of \eqref{eq:bdest} can be divided into three cases.

\noindent\textbf{Case 1.} $\lambda_0-l<1.$ 

Choose $0<h<1-\lambda_0+l$, there exists $K>0,$ such that for $k>K$, $\lambda_k-l<1$ and $\widetilde{x}^k_n-l<1$. Thus, neither $E_1$ nor $E_2$ is empty. Additionally, notice that if $A=\emptyset$, then $C=\emptyset$, and if $A\neq\emptyset$, then $C\neq\emptyset$. Also, for sufficiently large $k$, $D_1$ and $D_2$ can only be empty at the same time or neither. Therefore, there are three possible sub-cases.

\noindent\textit{Case 1.1:} $A, C,D_1,D_2$ are all non-empty. (See Figure \ref{fig:case1.1}.)

\begin{figure}[htb]
	\begin{center}
		\includegraphics[width=0.5\textwidth]{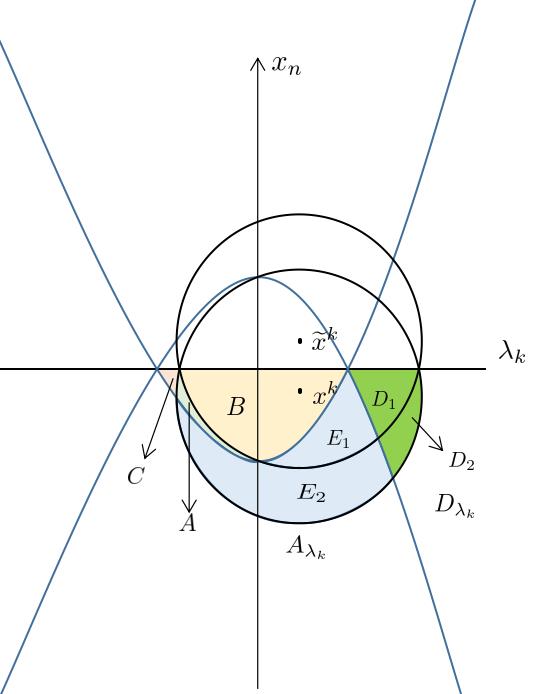}
		\caption{Case 1.1}
		\label{fig:case1.1}
	\end{center}
\end{figure}

A direct calculation shows
\begin{eqnarray}\label{case1-1:I}\\
	\nonumber
	I&=&P.V.\int_{H_{\lambda_k}\cup \widetilde{H}_{\lambda_k}}\frac{{w_{\lambda_k}(x^k)1_{B_{1}(x^k)}}(y)-w_{\lambda_k}(y)}{|x^k-y|^n}dy\\
	&=&P.V.\int_{H_{\lambda_k}\cap B_1(x^k)}\frac{w_{\lambda_k}(x^k)-w_{\lambda_k}(y)}{|x^k-y|^n}dy+P.V.\int_{H_{\lambda_k}\backslash B_1(x^k)}\frac{-w_{\lambda_k}(y)}{|x^k-y|^n}dy\nonumber\\
	&\,&+P.V.\int_{H_{\lambda_k}\cap B_1(\widetilde{x}^k)}\frac{w_{\lambda_k}(x^k)+w_{\lambda_k}(y)}{|x^k-y^{\lambda_k}|^n}dy  
	+P.V.\int_{H_{\lambda_k}\backslash B_1(\widetilde{x}^k)}\frac{w_{\lambda_k}(y)}{|x^k-y^{\lambda_k}|^n}dy\nonumber\\
	&\leq &P.V.\int_{A\cup B}\frac{w_{\lambda_k}(x^k)-w_{\lambda_k}(y)}{|x^k-y^{\lambda_k}|^n}dy+P.V.\int_{C}\frac{-w_{\lambda_k}(y)}{|x^k-y|^n}dy\nonumber\\
	& &\nonumber+P.V.\int_{{B}}\frac{w_{\lambda_k}(x^k)+w_{\lambda_k}(y)}{|x^k-y^{\lambda_k}|^n}dy+P.V.\int_{{A}\cup{C}}\frac{w_{\lambda_k}(y)}{|x^k-y^{\lambda_k}|^n}dy\\
	&=&P.V.\int_{A}\frac{w_{\lambda_k}(x^k)}{|x^k-y^{\lambda_k}|^n}dy+P.V.\int_{B}\frac{2w_{\lambda_k}(x^k)}{|x^k-y^{\lambda_k}|^n}dy+\int_{C}\bigg(\frac{1}{|x^k-y^{\lambda_k}|^n}-\frac{1}{|x^k-y|^n}\bigg)w_{\lambda_k}(y)dy \nonumber\\
	&<&P.V.\int_{A\cup B}\frac{w_{\lambda_k}(x^k)}{|x^k-y^{\lambda_k}|^n}dy+\int_{C}\bigg(\frac{1}{|x^k-y^{\lambda_k}|^n}-\frac{1}{|x^k-y|^n}\bigg)w_{\lambda_k}(y)dy. \nonumber
\end{eqnarray}

It follows from the fact that $u$ is a positive solution of \eqref{sec:six} and hence $w_{\lambda_k}(y)\geq 0$ in $A_{\lambda_k}$, we calculate
\begin{eqnarray}\label{case1-1:II}\\
	\nonumber
	II&=& \int_{A_{\lambda_k}\cup \widetilde{A}_{\lambda_k}}\frac{{w_{\lambda_k}(x^k)1_{B_{1}(x^k)}}(y)-w_{\lambda_k}(y)}{|x^k-y|^n}dy \\
	&=& \int_{A_{\lambda_k}}\frac{{w_{\lambda_k}(x^k)1_{B_{1}(x^k)}}(y)-w_{\lambda_k}(y)}{|x^k-y|^n}dy+\int_{ {A_{\lambda_k}}}\frac{{w_{\lambda_k}(x^k)1_{B_{1}(x^k)}}(y^{\lambda_k})-w_{\lambda_k}(y^{\lambda_k})}{|x^k-y^{\lambda_k}|^n}dy \nonumber \\
	&=& \int_{A_{\lambda_k}\cap B_1(x^k)}\frac{w_{\lambda_k}(x^k)-w_{\lambda_k}(y)}{|x^k-y|^n}dy+\int_{A_{\lambda_k}\backslash B_1(x^k)}\frac{-w_{\lambda_k}(y)}{|x^k-y|^n}dy\nonumber\\
	&\,&+\int_{A_{\lambda_k}\cap B_1(\widetilde{x}^k)}\frac{w_{\lambda_k}(x^k)+w_{\lambda_k}(y)}{|x^k-y^{\lambda_k}|^n}dy  
	+\int_{A_{\lambda_k}\backslash B_1(\widetilde{x}^k)}\frac{w_{\lambda_k}(y)}{|x^k-y^{\lambda_k}|^n}dy\nonumber\nonumber\\
	&=& \int_{E_1\cup E_2}\frac{w_{\lambda_k}(x^k)-w_{\lambda_k}(y)}{|x^k-y|^n}dy+\int_{A_{\lambda_k}\backslash B_1(x^k)}\frac{-w_{\lambda_k}(y)}{|x^k-y|^n}dy\nonumber\\
	&\,&+\int_{E_1}\frac{w_{\lambda_k}(x^k)+w_{\lambda_k}(y)}{|x^k-y^{\lambda_k}|^n}dy  
	+\int_{E_2\cup(A_{\lambda_k}\backslash B_1({x}^k))}\frac{w_{\lambda_k}(y)}{|x^k-y^{\lambda_k}|^n}dy\nonumber\nonumber\\
	&\leq& \int_{E_1}\frac{w_{\lambda_k}(x^k)}{|x^k-y^{\lambda_k}|^n}dy+\int_{E_2}\frac{w_{\lambda_k}(x^k)}{|x^k-y^{\lambda_k}|^n}dy+\int_{A_{\lambda_k}\backslash B_1(x^k)}\left(\frac{1}{|x^k-y^{\lambda_k}|^n}-\frac{1}{|x^k-y|^n}\right)w_{\lambda_k}(y)dy\nonumber\\
	&\leq& \int_{E_1\cup E_2}\frac{w_{\lambda_k}(x^k)}{|x^k-y^{\lambda_k}|^n}dy\nonumber \\
	&<& 0\nonumber.
\end{eqnarray}
By using the fact that $w_{\lambda_k}(y)=0$ in $D_{\lambda_k}$, we derive 
\begin{eqnarray}\label{case1-1:III}
	III&=&\int_{D_{\lambda_k}\cup \widetilde{D}_{\lambda_k}}\frac{{w_{\lambda_k}(x^k)1_{B_{1}(x^k)}}(y)-w_{\lambda_k}(y)}{|x^k-y|^n}dy \\
	&=&\int_{D_{\lambda_k}}\frac{{w_{\lambda_k}(x^k)1_{B_{1}(x^k)}}(y)-w_{\lambda_k}(y)}{|x^k-y|^n}dy+\int_{ {D_{\lambda_k}}}\frac{{w_{\lambda_k}(x^k)1_{B_{1}(x^k)}}(y^{\lambda_k})-w_{\lambda_k}(y^{\lambda_k})}{|x^k-y^{\lambda_k}|^n}dy \nonumber \\
	&=&\int_{D_{\lambda_k}\cap B_1(x^k)}\frac{w_{\lambda_k}(x^k)-w_{\lambda_k}(y)}{|x^k-y|^n}dy+\int_{D_{\lambda_k}\backslash B_1(x^k)}\frac{-w_{\lambda_k}(y)}{|x^k-y|^n}dy\nonumber\\
	&\,&+\int_{D_{\lambda_k}\cap B_1(\widetilde{x}^k)}\frac{w_{\lambda_k}(x^k)+w_{\lambda_k}(y)}{|x^k-y^{\lambda_k}|^n}dy  
	+\int_{D_{\lambda_k}\backslash B_1(\widetilde{x}^k)}\frac{w_{\lambda_k}(y)}{|x^k-y^{\lambda_k}|^n}dy\nonumber\nonumber\\
	&\leq&\int_{D_1\cup D_2}\frac{w_{\lambda_k}(x^k)-w_{\lambda_k}(y)}{|x^k-y^{\lambda_k}|^n}dy+P.V.\int_{D_1}\frac{w_{\lambda_k}(x^k)+w_{\lambda_k}(y)}{|x^k-y^{\lambda_k}|^n}dy\nonumber\\
	&<&\int_{D_1\cup D_2}\frac{w_{\lambda_k}(x^k)}{|x^k-y^{\lambda_k}|^n}dy.\nonumber\\
	&<& 0.\nonumber
\end{eqnarray}
Therefore, we obtain $$(-\Delta)^Lw_{\lambda_k}(x^k)\leq C_nP.V.\int_{M_1}\frac{w_{\lambda_k}(x^k)}{|x^k-y^{\lambda_k}|^n}dy+\int_C\left(\frac{1}{|x^k-y^{\lambda_k}|^n}-\frac{1}{|x^k-y|^n}\right)w_{\lambda_k}(y)dy.$$
where $M_1=A\cup B\cup E_1\cup E_2\cup D_1\cup D_2=\Sigma_{\lambda_k}\cap B_{1}(x^k).$

\noindent\textit{Case 1.2:} $A, C, D_1, D_2$ are all empty.  (See Figure \ref{fig:case1.2}.)

\begin{figure}[htb]
	\begin{center}
		\includegraphics[width=0.5\textwidth]{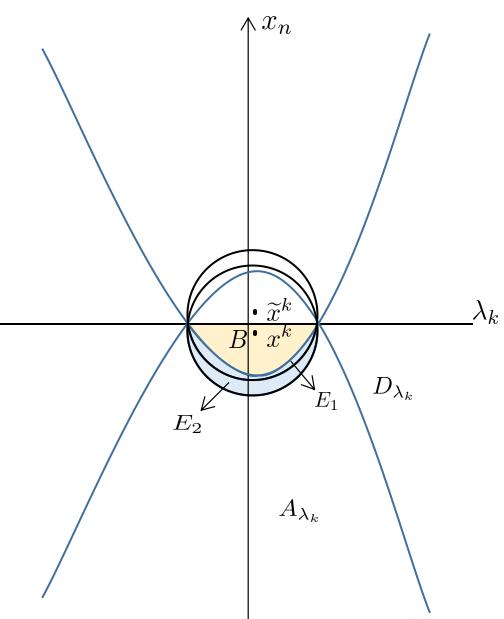}
		\caption{Case 1.2}
		\label{fig:case1.2}
	\end{center}
\end{figure}

By using a similar computation as in \eqref{case1-1:I}, we deduce
\begin{eqnarray*}\label{case1-1:I0}
	I
	&<&P.V.\int_{B}\frac{w_{\lambda_k}(x^k)}{|x^k-y^{\lambda_k}|^n}dy.
\end{eqnarray*}
In this case, as in \textit{Case 1.1}, the calculations for II are identical, we also have \eqref{case1-1:II}.

As to term $III$, since $w_{\lambda_k}\equiv0$ in $D_{\lambda_k}$ and $D_1=D_2=\emptyset$, we know that 
$D_{\lambda_k}\cap B_1(x^k)=D_{\lambda_k}\cap B_1(\widetilde{x}^k)=\emptyset$. Hence,
\begin{eqnarray}
	III &=&\int_{D_{\lambda_k}\cup \widetilde{D}_{\lambda_k}}\frac{{w_{\lambda_k}(x^k)1_{B_{1}(x^k)}}(y)-w_{\lambda_k}(y)}{|x^k-y|^n}dy\nonumber \\
	&=&\int_{D_{\lambda_k}}\frac{{w_{\lambda_k}(x^k)1_{B_{1}(x^k)}}(y)-w_{\lambda_k}(y)}{|x^k-y|^n}dy+\int_{ {D_{\lambda_k}}}\frac{{w_{\lambda_k}(x^k)1_{B_{1}(x^k)}}(y^{\lambda_k})-w_{\lambda_k}(y^{\lambda_k})}{|x^k-y^{\lambda_k}|^n}dy \nonumber \\
	&=& \int_{D_{\lambda_k}\cap B_1(x^k)}\frac{w_{\lambda_k}(x^k)-w_{\lambda_k}(y)}{|x^k-y|^n}dy+\int_{D_{\lambda_k}\backslash B_1(x^k)}\frac{-w_{\lambda_k}(y)}{|x^k-y|^n}dy\nonumber\\
	&\,&+\int_{D_{\lambda_k}\cap B_1(\widetilde{x}^k)}\frac{w_{\lambda_k}(x^k)+w_{\lambda_k}(y)}{|x^k-y^{\lambda_k}|^n}dy  
	+\int_{D_{\lambda_k}\backslash B_1(\widetilde{x}^k)}\frac{w_{\lambda_k}(y)}{|x^k-y^{\lambda_k}|^n}dy\nonumber\nonumber\\
	&=&0.\nonumber
\end{eqnarray} 	
In this case, we find 
$$
(-\Delta)^Lw_{\lambda_k}(x^k)\leq C_nP.V.\int_{M_2}\frac{w_{\lambda_k}(x^k)}{|x^k-y^{\lambda_k}|^n}dy,
$$
in which $M_2= B\cup E_1\cup E_2=\Sigma_{\lambda_k}\cap B_{1}(x^k).$

\noindent\textit{Case 1.3:} $A,C=\emptyset, D_1, D_2\neq\emptyset.$  (See Figure \ref{fig:case1.3}.)

\begin{figure}[htb]
	\begin{center}
		\includegraphics[width=0.5\textwidth]{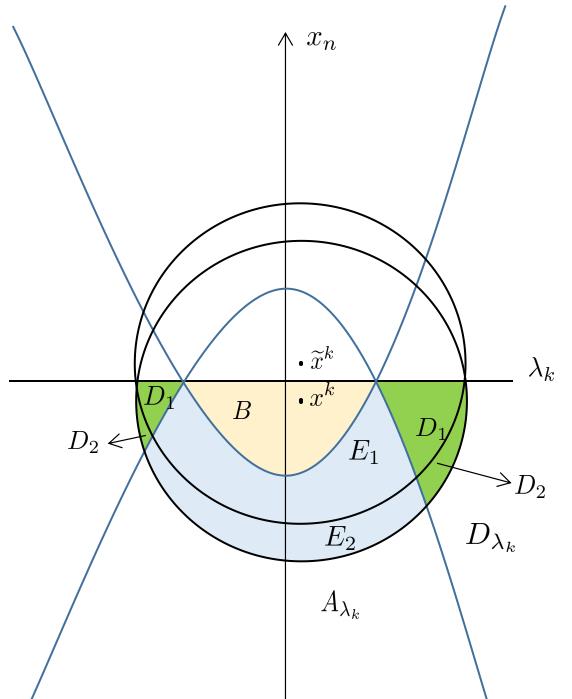}
		\caption{Case 1.3}
		\label{fig:case1.3}
	\end{center}
\end{figure}

The calculation of term	$I$  is the same as in \textit{Case 1.2}.
The calculation of $II$ and $III$ are the same as in \textit{Case 1.1}.

In this case, we obtain 
\begin{equation*}
	\label{con:case1.3}
	(-\Delta)^Lw_{\lambda_k}(x^k)\leq C_nP.V.\int_{M_3}\frac{w_{\lambda_k}(x^k)}{|x^k-y^{\lambda_k}|^n}dy,
\end{equation*}
where $M_3=B\cup E_1\cup E_2\cup D_1\cup D_2=\Sigma_{\lambda_k}\cap B_{1}(x^k).$

\noindent {$\mathbf{Case \, 2}$\label{Case2}} : $\lambda_0-l>1.$ 

By the assumption of $\lambda_k,$ we must have $\lambda_k-l>1.$ In this case, $C$ and $A$ are all non-empty. For sufficiently large $k$, $D_1$, $D_2$ can only be empty at the same time or neither. Moreover, if $E_1\neq \emptyset$, then $E_2\neq \emptyset$.  Therefore, there are four possible sub-cases.

\noindent\textit{Case 2.1:} $D_1, D_2, E_1, E_2$ are all empty.  (See Figure \ref{fig:case2.1}.)

\begin{figure}[htb]
	\begin{center}
		\includegraphics[width=0.5\textwidth]{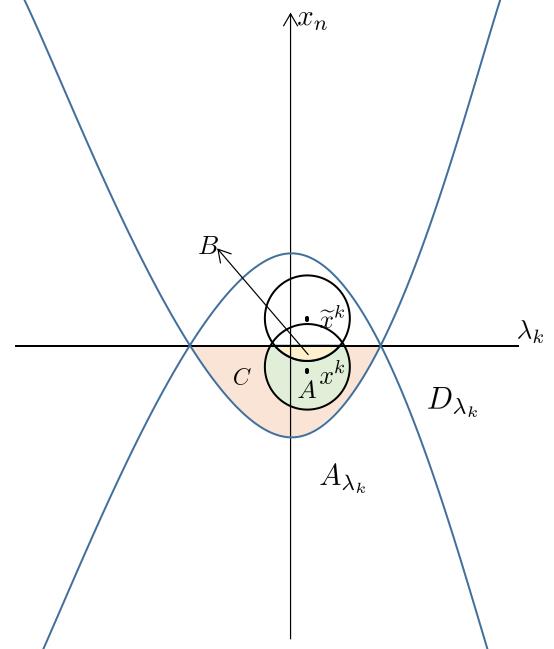}
		\caption{Case 2.1}
		\label{fig:case2.1}
	\end{center}
\end{figure}

By using a similar estimate as in \textit{Case 1.1}, we obtain 
\begin{equation*}
\label{case2-1:I}
I<P.V.\int_{A\cup B}\frac{w_{\lambda_k}(x^k)}{|x^k-y^{\lambda_k}|^n}dy+\int_{C}\bigg(\frac{1}{|x^k-y^{\lambda_k}|^n}-\frac{1}{|x^k-y|^n}\bigg)w_{\lambda_k}(y)dy.
\end{equation*}
Moreover, we know that $III=0$, by a same argument as in  \textit{Case 1.2}.

As to term $II$, since $A_{\lambda_k}\cap B_1(x^k)=A_{\lambda_k}\cap B_1(\widetilde{x}^k)=\emptyset$, $w_{\lambda_k}(x^k)>0$ in $A_{\lambda_k}$ and $|x^k-y|<|x^k-y^{\lambda_k}|$ for all $y\in A_{\lambda_k}$, we calculate
\begin{eqnarray}
	II&=&\int_{A_{\lambda_k}\cup \widetilde{A}_{\lambda_k}}\frac{{w_{\lambda_k}(x^k)1_{B_{1}(x^k)}}(y)-w_{\lambda_k}(y)}{|x^k-y|^n}dy\nonumber \\
	&=&\int_{A_{\lambda_k}\cap B_1(x^k)}\frac{w_{\lambda_k}(x^k)-w_{\lambda_k}(y)}{|x^k-y|^n}dy+\int_{A_{\lambda_k}\backslash B_1(x^k)}\frac{-w_{\lambda_k}(y)}{|x^k-y|^n}dy\nonumber\\
	&\,&+\int_{A_{\lambda_k}\cap B_1(\widetilde{x}^k)}\frac{w_{\lambda_k}(x^k)+w_{\lambda_k}(y)}{|x^k-y^{\lambda_k}|^n}dy  
	+\int_{A_{\lambda_k}\backslash B_1(\widetilde{x}^k)}\frac{w_{\lambda_k}(y)}{|x^k-y^{\lambda_k}|^n}dy\nonumber\nonumber\\
	&=&\int_{A_{\lambda_k}}\frac{-w_{\lambda_k}(y)}{|x^k-y|^n}dy+\int_{A_{\lambda_k}}\frac{w_{\lambda_k}(y)}{|x^k-y^{\lambda_k}|^n}dy\nonumber\\
	&=&\int_{A_{\lambda_k}}\left(\frac{1}{|x^k-y^{\lambda_k}|^n}-\frac{1}{|x^k-y|^n}\right)w_{\lambda_k}(y)dy\nonumber\\
	&<&0.\nonumber 
\end{eqnarray}
So we deduce 
$$
(-\Delta)^Lw_{\lambda_k}(x^k)\leq C_nP.V.\int_{M_4}\frac{w_{\lambda_k}(x^k)}{|x^k-y^{\lambda_k}|^n}dy+\int_{C}\bigg(\frac{1}{|x^k-y^{\lambda_k}|^n}-\frac{1}{|x^k-y|^n}\bigg)w_{\lambda_k}(y)dy.
$$
where $M_4=A\cup B=\Sigma_{\lambda_k}\cap B_{1}(x^k).$

\noindent\textit{Case 2.2:} $D_1, D_2, E_1, E_2$ are all non-empty.  (See Figure \ref{fig:case2.2}.)

\begin{figure}[htb]
	\begin{center}
		\includegraphics[width=0.5\textwidth]{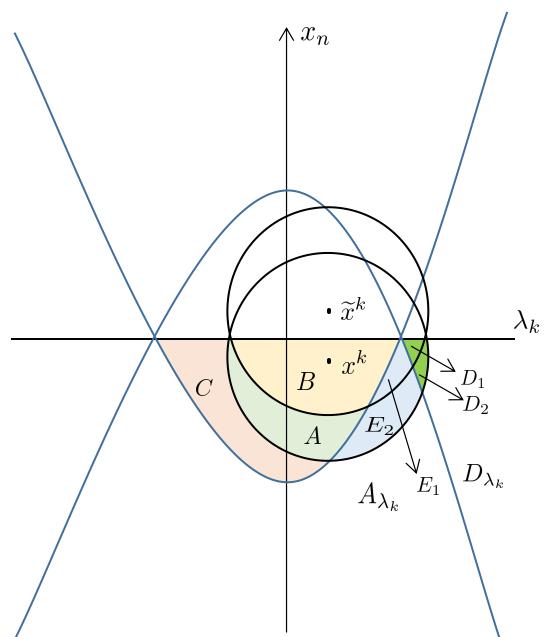}
		\caption{Case 2.2}
		\label{fig:case2.2}
	\end{center}
\end{figure}
This calculation is an exact duplicate of that in \textit{Case 1.1}, which we can also obtain $$(-\Delta)^Lw_{\lambda_k}(x^k)\leq C_nP.V.\int_{M_5}\frac{w_{\lambda_k}(x^k)}{|x^k-y^{\lambda_k}|^n}dy+\int_C\left(\frac{1}{|x^k-y^{\lambda_k}|^n}-\frac{1}{|x^k-y|^n}\right)w_{\lambda_k}(y)dy.$$
where $M_5=A\cup B\cup E_1\cup E_2\cup D_1\cup D_2=\Sigma_{\lambda_k}\cap B_1(x^k).$

\noindent\textit{Case 2.3:} $D_1,D_2=\emptyset, E_1,E_2\neq\emptyset.$  (See Figure \ref{fig:case2.3}.)

\begin{figure}[htb]
	\begin{center}
		\includegraphics[width=0.5\textwidth]{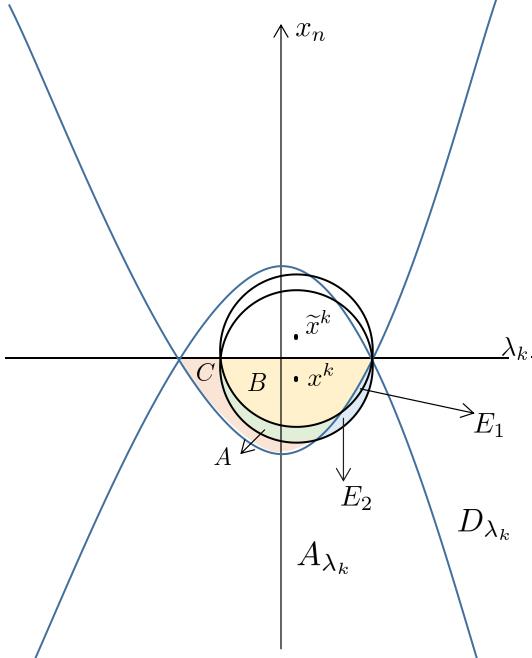}
		\caption{Case 2.3}
		\label{fig:case2.3}
	\end{center}
\end{figure}

The calculations for $I$  and $II$ are identical to those in \textit{Case 1.1} and 
the calculations for $III$  is identical to the one in 
\textit{Case 1.2}. 
Hence we deduce 
$$
(-\Delta)^Lw_{\lambda_k}(x^k)\leq C_nP.V.\int_{M_6}\frac{w_{\lambda_k}(x^k)}{|x^k-y^{\lambda_k}|^n}dy+\int_C\left(\frac{1}{|x^k-y^{\lambda_k}|^n}-\frac{1}{|x^k-y|^n}\right)w_{\lambda_k}(y)dy,
$$
where $M_6=A\cup B\cup E_1\cup E_2=\Sigma_{\lambda_k}\cap B_1(x^k).$

\noindent\textit{Case 2.4:} $D_1=D_2=E_1=\emptyset, E_2\neq\emptyset.$

\begin{figure}[htb]
	\begin{center}
		\includegraphics[width=0.5\textwidth]{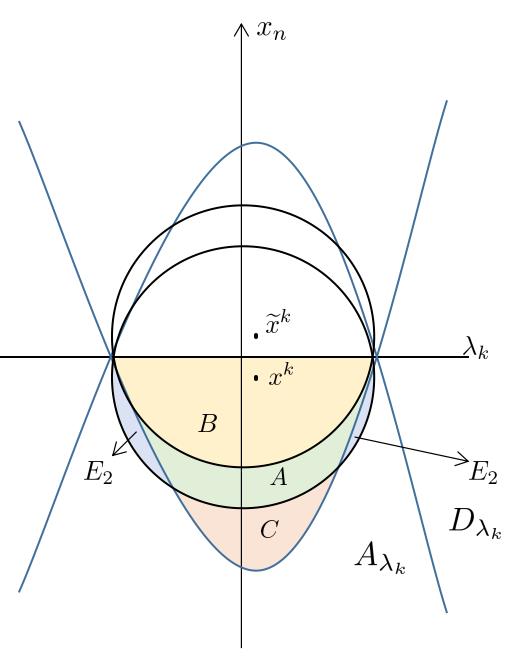}
		\caption{Case 2.4}
		\label{fig:case2.4}
	\end{center}
\end{figure}
The calculation of $I$ is the same as \textit{Case1.1}, and the calculation of $III$ is the same as \textit{Case1.3}.

By using a similar estimate as in \textit{Case1.1},  we obtain
\begin{eqnarray*}
	II=P.V.\int_{A_{\lambda_k}\cup \widetilde{A}_{\lambda_k}}\frac{{w_{\lambda_k}(x^k)1_{B_{1}(x^k)}}(y)-w_{\lambda_k}(y)}{|x^k-y|^n}dy\nonumber 
	\leq P.V.\int_{E_2}\frac{w_{\lambda_k}(x^k)}{|x^k-y^{\lambda_k}|^n}dy.\nonumber 
\end{eqnarray*}
Hence, we arrive at
$$
(-\Delta)^Lw_{\lambda_k}(x^k)\leq C_nP.V.\int_{M_7}\frac{w_{\lambda_k}(x^k)}{|x^k-y^{\lambda_k}|^n}dy+\int_C\big(\frac{1}{|x^k-y^{\lambda_k}|^n}-\frac{1}{|x^k-y|^n}\big)w_{\lambda_k}(y)dy,
$$
where $M_7=A\cup B\cup E_2=\Sigma_{\lambda_k}\cap B_1(x^k).$

\noindent {$\mathbf{Case \, 3}$} : $\lambda_0-l=1.$  

In this case, if $x^k_n\in (\lambda_0, \lambda_k)$, the discussion is analogous to \textit{Case 2}, so we omit it here.
Hence, we only consider  $x^k_n<\lambda_0$, which implies $A, E_1, E_2, D_1, D_2$ are all non-empty.

\noindent\textit{Case 3.1:} $C\neq \emptyset$. (See Figure \ref{fig:case3.1}.)
\begin{figure}[htb]
	\begin{center}
		\includegraphics[width=0.5\textwidth]{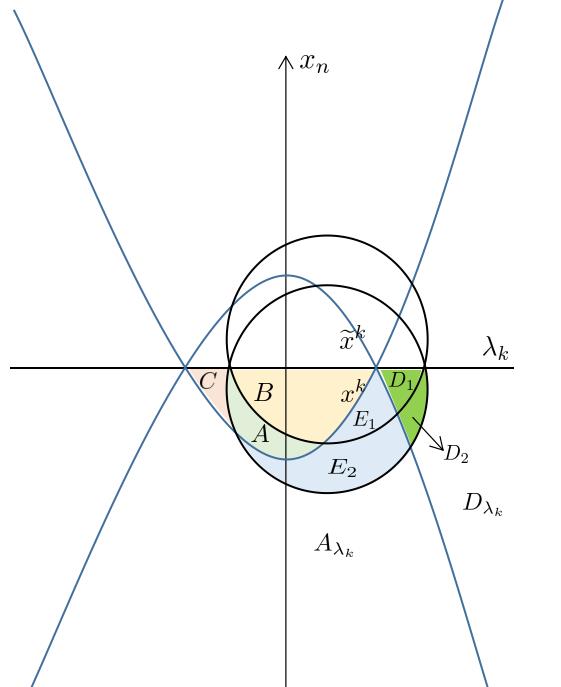}
		\caption{Case 3.1}
		\label{fig:case3.1}
	\end{center}
\end{figure}
The situation is similar to that of \textit{Case 1.1}.

\noindent\textit{Case 3.2:} $C=\emptyset$. (See Figure \ref{fig:case3.2}.)
\begin{figure}[htb]
	\begin{center}
		\includegraphics[width=0.5\textwidth]{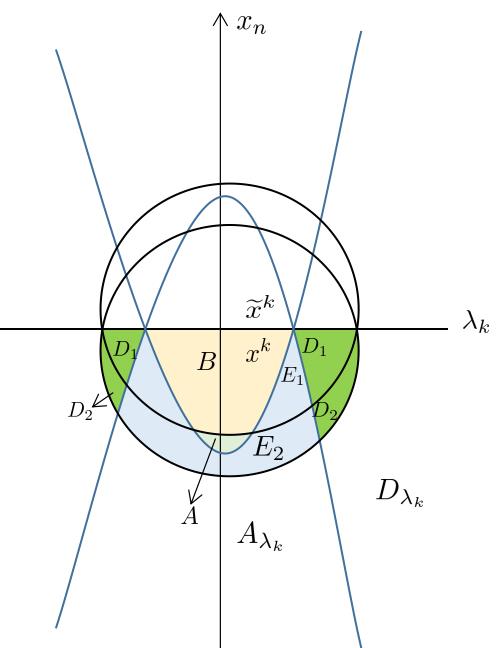}
		\caption{Case 3.2}
		\label{fig:case3.2}
	\end{center}
\end{figure}

The calculations of terms $II$ and $III$ are identical to those in \textit{Case 1.1} and there hold \eqref{case1-1:II} and \eqref{case1-1:III}. In order to estimate term $I$, we use the anti-symmetry property of $w_{\lambda_k}$ and $C=H_{\lambda_k}\backslash B_1(x^k)=\emptyset,$ to derive that
\begin{eqnarray*}\label{tab}
	I&=&P.V.\int_{H_{\lambda_k}\cup \widetilde{H}_{\lambda_k}}\frac{{w_{\lambda_k}(x^k)1_{B_{1}(x^k)}}(y)-w_{\lambda_k}(y)}{|x^k-y|^n}dy\nonumber\\
	&=&P.V.\int_{H_{\lambda_k}\cap B_1(x^k)}\frac{w_{\lambda_k}(x^k)-w_{\lambda_k}(y)}{|x^k-y|^n}dy+P.V.\int_{H_{\lambda_k}\backslash B_1(x^k)}\frac{-w_{\lambda_k}(y)}{|x^k-y|^n}dy\nonumber\\
	&\,&+P.V.\int_{H_{\lambda_k}\cap B_1(\widetilde{x}^k)}\frac{w_{\lambda_k}(x^k)+w_{\lambda_k}(y)}{|x^k-y^{\lambda_k}|^n}dy  
	+P.V.\int_{H_{\lambda_k}\backslash B_1(\widetilde{x}^k)}\frac{w_{\lambda_k}(y)}{|x^k-y^{\lambda_k}|^n}dy\\
	&\leq &P.V.\int_{A\cup B}\frac{w_{\lambda_k}(x^k)-w_{\lambda_k}(y)}{|x^k-y^{\lambda_k}|^n}dy+P.V.\int_{{B}}\frac{w_{\lambda_k}(x^k)+w_{\lambda_k}(y)}{|x^k-y^{\lambda_k}|^n}dy+\int_{{A}}\frac{w_{\lambda_k}(y)}{|x^k-y^{\lambda_k}|^n}dy\nonumber\\
	&= &P.V.\int_{A}\frac{w_{\lambda_k}(x^k)}{|x^k-y^{\lambda_k}|^n}dy+P.V.\int_{{B}}\frac{2w_{\lambda_k}(x^k)}{|x^k-y^{\lambda_k}|^n}dy\nonumber\\
	&<&P.V.\int_{A\cup B}\frac{w_{\lambda_k}(x^k)}{|x^k-y^{\lambda_k}|^n}dy. \nonumber
\end{eqnarray*}
Therefore, we obtain
$$
(-\Delta)^Lw_{\lambda_k}(x^k)\leq C_nP.V.\int_{M_8}\frac{w_{\lambda_k}(x^k)}{|x^k-y^{\lambda_k}|^n}dy,
$$
where $M_8=A\cup B=\Sigma_{\lambda_k}\cap B_1(x^k).$

To conclude, combining all the above scenarios, we have
\begin{equation}\label{est:delw0}
	(-\Delta)^Lw_{\lambda_k}(x^k)\leq  C_n (I_{1k}+I_{2k}),
\end{equation}
in which 
$$
I_{1k}=w_{\lambda_k}(x^k) \int_{\Sigma_{\lambda_k}\cap B_1(x^k)}\frac{1}{|x^k-y^{\lambda_k}|^n}dy,\quad 
I_{2k}=\int_{C}\bigg(\frac{1}{|x^k-y^{\lambda_k}|^n}-\frac{1}{|x^k-y|^n}\bigg)w_{\lambda_k}(y)dy. 
$$

Let $H=\{y~|~h/2<y_n-x_n^k<\frac{1}{4},|y'-(x^k)'|<\frac{1}{8}\}\subset B_1(x^k)\cap\widetilde{\Sigma}_{\lambda_k}$, where $h>0$ defined as in \eqref{con:slk} is sufficiently small and will be chosen later.
Set $s=y_n-x_n^k, \tau=|y'-(x^k)'|$ and $\omega_{n-2}=|B_1(0)|$ in $ R^{n-2}.$ 
We can estimate the integral $\int_{\Sigma_{\lambda_k}\cap B_1(x^k)}\frac{1}{|x^k-y^{\lambda_k}|^n}dy$ as follows:
\begin{equation}\label{est:infi}
	\begin{aligned}
		\int_{\Sigma_{\lambda_k}\cap B_1(x^k)}\frac{1}{|x^k-y^{\lambda_k}|^n}dy&\geq \int_{H}\frac{1}{|x^k-y|^n}dy\\
		&=\int_{h/2}^{\frac{1}{4}}\int_{0}^{\frac{1}{8}}\frac{\omega_{n-2}\tau^{n-2}}{(s^2+\tau^2)^{\frac{n}{2}}}d\tau ds
		=\int_{h/2}^{\frac{1}{4}}\frac{1}{s}\int_{0}^{\frac{1}{8s}}\frac{\omega_{n-2}t^{n-2}}{(1+t^2)^{\frac{n}{2}}}dtds\\
		&
		\geq\int_{h/2}^{\frac{1}{4}}\frac{1}{s}\int_{0}^{\frac{1}{2}}\frac{\omega_{n-2}t^{n-2}}{(1+t^2)^{\frac{n}{2}}}dtds= c\int_{h/2}^{\frac{1}{4}}\frac{1}{s}ds\\
		&
		=c(\ln\frac{1}{4}-\ln h)\to+\infty, \textrm{as}\ h\to 0^+.
	\end{aligned}
\end{equation}
where $c>0$ is a constant.
Now we choose $h>0$ so that $\ln\frac{1}{4}-\ln h>0$, which leads to 
\begin{equation*}
	I_{1k}=w_{\lambda_k}(x^k) P.V.\int_{\Sigma_{\lambda_k}\cap B_1(x^k)}\frac{1}{|x^k-y^{\lambda_k}|^n}dy<0,
\end{equation*}
for sufficiently large $k$. 
Hence 
\begin{equation}
	\label{est:ik1}
	\overline{\lim\limits_{\delta_k\to 0}}\frac{I_{1k}}{\delta_k}\leq 0.
\end{equation}

In order to estimate $I_{2k}$, we apply the mean value theorem on $f(t) = t^{\frac{-n}{2}}$ with $t=|\cdot\cdot\cdot |^2$ to obtain 
$$
\frac{1}{\delta_k}\left(\frac{1}{|x^k-y^{\lambda_k}|^n}-\frac{1}{|x^k-y|^n}\right)=-\frac{2n(\lambda_k-y_n)}{|\eta_k(y)|^{(n+2)}}
\to -\frac{2n(\lambda_0-y_n)}{|\eta_0(y)|^{(n+2)}}<0, \textrm{as}\ k\to \infty,
$$
where $|\eta_k(y)|\in (|x^k-y|,|x^k-y^{\lambda_k}|)$ and $|\eta_0(y)|\in (|x^0-y|,|x^0-y^{\lambda_0}|)$.
Meanwhile, 
$$w_{\lambda_k}(y)\to w_{\lambda_0}(y)>0, \quad \forall y\in C,
$$
which leads to 
\begin{equation}
	\label{est:ik2}\overline{\lim\limits_{\delta_k\to 0}}\frac{I_{2k}}{\delta_k}<0.
\end{equation}

Therefore, putting \eqref{est:ik1} and \eqref{est:ik2} into \eqref{est:delw0}, we obtain
$
	\overline{\lim\limits_{\delta_k\to 0}}\frac{(-\Delta)^Lw_{\lambda_k}(x^k)}{\delta_k}<0.
$
Notice that 
$$
\frac{L_\Delta w_{\lambda_k}(x^k)}{\delta_k}
=\frac{(-\Delta)^Lw_{\lambda_k}(x^k)+\rho_n w_{\lambda_k}(x^k)}{\delta_k}
<\frac{(-\Delta)^Lw_{\lambda_k}(x^k)}{\delta_k},
$$
we get
$$
\overline{\lim\limits_{\delta_k\to 0}}\frac{L_\Delta w_{\lambda_k}(x^k)}{\delta_k}<0.
$$

This completes the proof of Lemma \ref{lem:smp}.
\end{proof}

Lemma \ref{lem:smp} is a key ingredient in applying the direct method of moving planes. 
We now prove that the positive solution of problem \eqref{sec:six} must be strictly monotone increasing along $x_n$ direction.
\begin{proof}[Proof of Theorem \ref{result:min}] 
The conclusion is equivalent to that for all $\lambda>l$, $w_\lambda (x)\geq 0$, $\forall x\in H_\lambda.$ Thanks to Lemma \ref{lem:smp}, it is enough to show that for each $\lambda>l$, there holds 
\begin{equation}
	\label{eq:aim}
	w_\lambda (x)\geq 0,\quad \forall x\in H_\lambda.
\end{equation}

The proof is divided into two steps.

\textbf{Step 1.} We show that for $\lambda>l$ and sufficiently closed to $l$, \eqref{eq:aim} holds.

Suppose otherwise, then there exists a point $x^0\in H_{\lambda}$, such that $w_{\lambda}(x^0)<0$.
Notice that $w_{\lambda}(x)\geq 0$, for all $x\in \Sigma_\lambda \backslash H_\lambda$ and $w_\lambda(x)\in C(\bar \Omega)$. Without loss of generalization, let
$$
w_{\lambda}(x^0)=\min\limits_{x\in \Sigma_{\lambda}}w_{\lambda}(x) < 0.
$$ 

Similarly to the calculation of \textit{Case 1.3} in Lemma \ref{lem:two}, we derive
\begin{equation*}\label{sec:ses}
	\begin{aligned}
	L_\Delta w_{\lambda}(x^0)<	(-\Delta)^{L}w_{\lambda}(x^0)&\leq C_n w_{\lambda}(x^0) P.V.\int_{\Sigma_{\lambda}\cap B_1(x^0)}\frac{1}{|x^0-y^{\lambda}|^n}dy.
	\end{aligned}
\end{equation*}
For $0<\lambda-l<\frac{m}{2}$, in which $m>0$ is sufficiently small, by using a similar argument as in \eqref{est:infi}, we estimate
\begin{equation*}
		\int_{\Sigma_{\lambda}\cap B_1(x^0)}\frac{1}{|x^0-y^{\lambda}|^n}dy
		\geq c(\ln\frac{1}{4}-\ln m)\to+\infty, as~m\to 0^+.
\end{equation*}
where $c>0$ is a constant.
Thus, we have
\begin{equation}\label{sec:eie}
	\begin{aligned}
		L_\Delta w_{\lambda}(x^0)
		&\leq cw_{\lambda}(x^0)(\ln \frac{1}{4}-\ln m)
	\end{aligned}
\end{equation}
Combining \eqref{sec:sev} with \eqref{sec:eie}, we arrive at
\begin{equation*}
	a(x_n^0)M(\lambda,x^0)\geq c(\ln \frac{1}{4}-\ln m).
\end{equation*}
Choose $m>0$ sufficiently small so that $\ln \frac{1}{4}-\ln m>0$, which leads to
\begin{equation*}
	\frac{a(x_n^0)}{\ln \frac{1}{4}-\ln m}M(\lambda,x^0)\geq c.
\end{equation*}
According to assumption (i), \eqref{pro:M} and $a(x_n^0)\leq a(m+l)$ since $x_0\in H_\lambda$, $x_n^0<\lambda<m+l$, we have 
\begin{equation}\label{cont:up}
	\frac{a(l+m)}{\ln \frac{1}{4}-\ln m}M(\lambda,x^0)\geq c>0.
\end{equation}
Inequality \eqref{cont:up} contradicts with assumption \eqref{ass:lim}, when $m$ is sufficiently small.

This completes step 1. 

Step 1 provides a starting point to move the plane. Now we move the hyperplane $T_{\lambda}$ towards upside as long as inequality \eqref{eq:aim} holds.
Define
$$
\lambda_0=\sup\{\lambda\ |\ w_{\mu}(x)\geq 0, x\in H_{\mu}, \forall \mu \in (l,\lambda)\}.
$$

\textbf{Step 2.} We will show that $T_{\lambda}$ can be moved all the way to infinity, i.e. $\lambda_0=+\infty$.

Assume for contradiction that $\lambda_0$ is finite. It follows from the definition of $\lambda_0$ that there exist $\{\lambda_k\}_{k=1}^\infty$ and $\{x^k\}_{k=1}^\infty$ satisfying $\lambda_0<\lambda_{k+1}<\lambda_k$, $\lim_{k\to\infty}\lambda_k= \lambda_0$, $x^k\in H_{\lambda_k}$, such that
\begin{equation}\label{sec:nin}
	w_{\lambda_k}(x^k)=\min\limits_{x\in H_{\lambda_k}}w_{\lambda_k}(x)<0.
\end{equation}

In view of the fact that $H_{\lambda_k}\subset H_{\lambda_1}$, the sequence $\{x^k\}_{k=1}^\infty$ is  bounded. Thus,
there is a sub-sequence of $\{x^k\}$ (still denote it as $\{x^k\}$) converges to some point $x^0\in \overline{H}_{\lambda_0}$. 
Hence, from \eqref{sec:nin}, we have
\begin{equation}
	\label{prop:wlam0-1}
	w_{\lambda_0}(x^0)\leq 0.
\end{equation}

On the other hand, by using the definition of $\lambda_0$ and the continuity, we know that 
$$
w_{\lambda_0}(x) \geq 0,\quad \forall x\in H_{\lambda_0}.
$$
Using the above inequality and Lemma \ref{lem:smp}, we obtain
\begin{equation}
	\label{prop:wlam0-2}
	w_{\lambda_0}(x)>0,\quad \forall x\in H_{\lambda_0}.
\end{equation} 

Combining \eqref{prop:wlam0-1} and \eqref{prop:wlam0-2} together, we get $w_{\lambda_0}(x^0)=0$ and so
$x^0\in \partial H_{\lambda_0}\cap T_{\lambda_0}$.

Moreover, since $x^k\in \Sigma_{\lambda_k}$ is the interior minimum point, we have
\begin{equation*}
	\nabla w_{\lambda_{k}}(x^k)=0,
\end{equation*}
which implies
\begin{eqnarray}
	\nonumber  0 &=& \frac{\partial }{\partial x_n} w_{\lambda_{k}}(x^k)\\
	\nonumber &=& \frac{\partial }{\partial x_n}\left(
	u(x', 2\lambda_k-x_n)-u(x', x_n)
	\right)|_{x^k}\\
	\label{eq:dudx11} &=&-\frac{\partial u}{\partial x_n}( (x^k)', 2\lambda_k-x^k_n)-\frac{\partial u}{\partial x_n}((x^k)', x^k_n).
\end{eqnarray}
Taking limit $k\to \infty$ in \eqref{eq:dudx11}, we obtain $0=-2\frac{\partial u}{\partial x_n}(x^0)$, i.e.
\begin{equation}
	\label{eq:du0}
	\frac{\partial u}{\partial x_n}(x^0)=0.
\end{equation}

Dividing both side of \eqref{sec:sev} by $\delta_{k}=|\lambda_k-x^k_n|=\lambda_k-x^k_n$ and using Lemma \ref{lem:two}, we deduce that
\begin{equation} \label{cot}
	\qquad 0 > \varlimsup_{k\to \infty}\frac{L_\Delta w_{\lambda_k}(x^k)}{\delta_k}\geq  \varlimsup_{k\to \infty}a(x_n^k)M(\lambda_k,x^k)\frac{w_{\lambda_k}(x^k)}{\delta_k}.
\end{equation}
Noticing that as $k\to \infty$, we have $\delta_k\to 0$, $x^k\to x^0$, $a(x_n^k)M(\lambda_k,x^k)\to a(\lambda_0)M(\lambda_0,x^0)$ and
\begin{eqnarray}
	\label{dw}
	\frac{w_{\lambda_{k}}(x^k)}{\delta_k}&=&\frac{u((x^k)', 2\lambda_k-x^k_n)-u((x^k)', x^k_n)}{|\lambda_k-x^k_n|}\\
	\nonumber	&=& \frac{\frac{\partial u}{\partial x_n}((x^k)', \theta^k)\cdot 2(\lambda_k-x^k_n)}{\lambda_k-x^k_n}\\
	\nonumber	&=& 2 \frac{\partial u}{\partial x_n}((x^k)', \theta^k)\to 2\frac{\partial u}{\partial x_n}(x^0)=0\quad \textrm{as} k\to \infty,
\end{eqnarray}
in which $\theta^k$ is between $2\lambda_k-x^k_n$ and $x^k_n$, and also we have used \eqref{eq:du0}.

Putting \eqref{dw} into the right side of \eqref{cot} will lead to a contradiction.

Therefore, we have $\lambda_0=+\infty.$
\end{proof}

\section{Non-existence of the positive bounded solutions}\label{sec:leq}
In the previous section, we have proved that the positive bounded solutions of \eqref{sec:six} is monotone increasing along $x_n$ direction. Based on this result, in this section, we will prove the non-existence of positive bounded solutions for equation \eqref{sec:six}.

The following maximum principle will be needed in our proof. It has been obtained in \cite{CLL1} (Proposition 4.1).
We include the proof for readers' convenience.
Let $B_1(Re_n)\subset \mathbb{R}^n$ be the unit ball centered at $(0,\dots,0,R)$.

\begin{proposition}[Strong Maximum Principle in a unit ball]
\label{pro:one}
Let $u\in  C(\overline{B_1(Re_n)})$ satisfying
\begin{equation}
	\label{sec:twen}
	\left\{
	\begin{array}{ll}
		L_\Delta u(x)=\left((-\Delta)^L+\rho_n\right)u(x)\geq 0, & x\in B_1(Re_n), \\
		u(x)>0,& x\in \mathbb{R}^n\backslash B_1(Re_n).
	\end{array}
	\right.
\end{equation}
Then $u>0$ in $B_1(Re_n).$
\end{proposition}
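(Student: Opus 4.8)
The plan is to argue by contradiction, evaluating $L_\Delta$ at a minimum point in the same spirit as the proofs of Lemmas \ref{lem:smp} and \ref{lem:two}. Since $u\in C(\overline{B_1(Re_n)})$ and $u>0$ on $\mathbb{R}^n\setminus B_1(Re_n)$ (in particular on $\partial B_1(Re_n)$), if the conclusion failed then the minimum of $u$ over the compact set $\overline{B_1(Re_n)}$ would be attained at an interior point $x^0\in B_1(Re_n)$ with $u(x^0)=\min_{\overline{B_1(Re_n)}}u\le 0$. Because $u>0$ outside the ball, this $x^0$ is in fact a global minimizer of $u$ over all of $\mathbb{R}^n$, so $u(y)\ge u(x^0)$ for every $y$. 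The goal is then to show $L_\Delta u(x^0)<0$, contradicting the hypothesis $L_\Delta u(x^0)\ge 0$ from \eqref{sec:twen}.

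To this end I would expand $(-\Delta)^L u(x^0)$ through its singular-integral representation \eqref{def:logl}, splitting $\mathbb{R}^n$ into the local ball $B_1(x^0)$ and its complement exactly as in \eqref{est:delw}. On $B_1(x^0)$ the integrand is $\frac{u(x^0)-u(y)}{|x^0-y|^n}\le 0$ by global minimality, and it is strictly negative on a set of positive measure because continuity together with the exterior positivity forbids $u$ from being constant equal to $u(x^0)\le 0$ throughout $B_1(x^0)$; hence the local contribution is strictly negative. The decisive term is the nonlocal part $\int_{\mathbb{R}^n\setminus B_1(x^0)}\frac{-u(y)}{|x^0-y|^n}\,dy$: on the unbounded exterior $\mathbb{R}^n\setminus B_1(Re_n)$ one has $u>0$, which forces a strictly negative contribution there, and this is what one plays against the remaining pieces. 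The zeroth-order term contributes $\rho_n u(x^0)\le 0$ when $u(x^0)<0$ and vanishes when $u(x^0)=0$; in the latter case $u(y)\ge 0$ everywhere makes the whole integrand nonpositive, and the exterior region already yields $L_\Delta u(x^0)<0$, closing that case immediately.

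The main obstacle is precisely this nonlocal term. Because the kernel $|x^0-y|^{-n}$ has order $-n$, it is not integrable at infinity, so constants do not belong to $L_0$ and one cannot subtract off a constant barrier or compare $u(y)$ with its boundary values: any attempt to add and subtract $u(x^0)$ inside $\int_{\mathbb{R}^n\setminus B_1(x^0)}$ splits a finite quantity into two divergent ones. Moreover, when $u(x^0)<0$ the integrand $-u(y)/|x^0-y|^n$ is genuinely positive on the region $\{u<0\}\setminus B_1(x^0)\subset B_1(Re_n)\setminus B_1(x^0)$, so the full integrand is not globally nonpositive and the sign of $(-\Delta)^L u(x^0)$ is not immediate. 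The heart of the argument is therefore to quantify the strictly negative exterior contribution coming from $u>0$ on $\mathbb{R}^n\setminus B_1(Re_n)$ together with the strictly negative local contribution, and to show that, combined with $\rho_n u(x^0)\le 0$, they dominate the positive contribution of the interior negativity region, exploiting the specific geometry of the unit ball. I expect this balancing to be the only delicate point; once it is in place, $L_\Delta u(x^0)<0$ follows and contradicts \eqref{sec:twen}, yielding $u>0$ in $B_1(Re_n)$.
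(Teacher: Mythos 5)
Your framework is the right one and matches the paper's: argue by contradiction at an interior minimum $x^0\in B_1(Re_n)$ with $u(x^0)=\min_{\overline{B_1(Re_n)}}u\le 0$, split the singular integral, and aim for $L_\Delta u(x^0)<0$ against \eqref{sec:twen}; your disposal of the case $u(x^0)=0$ is also correct. But the proposal stops exactly where the proof has to start: in the main case $u(x^0)<0$ you say one must show that the negative exterior and local contributions ``dominate the positive contribution of the interior negativity region, exploiting the specific geometry of the unit ball,'' and that you \emph{expect} this balancing to work. That balancing is the entire content of the proposition, and no mechanism for it is given; indeed you correctly observe that the kernel is non-integrable at infinity so one cannot subtract a constant globally, but you do not find the way around this obstruction, which is precisely what the paper supplies.

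The missing idea is to add and subtract the constant $u(x^0)$ \emph{only over bounded sets}, so no divergent integral ever appears. Set $A=B_1(x^0)\cap B_1(Re_n)$ and $D=\mathbb{R}^n\setminus\bigl(B_1(x^0)\cup B_1(Re_n)\bigr)$. Discard the $D$-term, which is strictly negative since $u>0$ on $D$; on $B_1(x^0)\setminus A$ (outside the domain, so $u>0$ there) bound $u(x^0)-u(y)< u(x^0)$; and on $B_1(Re_n)\setminus A$ write $-u(y)=\bigl(u(x^0)-u(y)\bigr)-u(x^0)$. After recombining, the differences $u(x^0)-u(y)$ run over all of $B_1(Re_n)$, where they are $\le 0$ by interior minimality --- this is what absorbs your ``interior negativity region,'' since $u(x^0)-u(y)\le 0$ even where $u(y)<0$, with no absolute-size estimate needed. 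What remains is
\[
C_n\,u(x^0)\left(\int_{B_1(x^0)\setminus A}\frac{dy}{|x^0-y|^n}-\int_{B_1(Re_n)\setminus A}\frac{dy}{|x^0-y|^n}\right),
\]
and the bracket is nonnegative by the unit-ball geometry: $|x^0-y|<1$ on $B_1(x^0)\setminus A$ and $|x^0-y|>1$ on $B_1(Re_n)\setminus A$, so the first integral is at least $|B_1(x^0)\setminus A|$ and the second at most $|B_1(Re_n)\setminus A|$, and these volumes are \emph{equal} because both balls are unit balls. Since $u(x^0)\le 0$ this term is $\le 0$; together with the strictly negative $D$-term and $\rho_n u(x^0)\le 0$ (as $\rho_n>0$), one gets $L_\Delta u(x^0)<0$, the desired contradiction. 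This volume-comparison rearrangement --- not a pointwise domination of one region by another --- is the step your proposal lacks, and without it the case $u(x^0)<0$ remains open.
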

\begin{proof}
Without loss of generality, suppose for contradiction that there is $x_0\in B_1(Re_n)$ such that
\begin{equation}\label{def:minp}
u(x_0)=\min\limits_{x\in B_1(Re_n)}u(x)\leq0.
\end{equation}
Denote $A=B_1(x_0)\cap B_1(Re_n)$, $D=\mathbb{R}^n\backslash (B_1(x_0)\cup B_1(Re_n))$.
By using the definition of $(-\Delta)^L$, we have  
\begin{eqnarray*}
(-\Delta)^Lu(x_0)&=& C_nP.V.\int_{B_1(x_0)}\frac{u(x_0)-u(y)}{|x_0-y|^n}dy +C_n\int_{\mathbb{R}^n\backslash B_1(x_0)}\frac{-u(y)}{|x_0-y|^n}dy\\
&=& C_nP.V.\int_{A}\frac{u(x_0)-u(y)}{|x_0-y|^n}dy +C_n\int_{B_1(x_0)\backslash A}\frac{u(x_0)-u(y)}{|x_0-y|^n}dy \\
& &-C_n\int_{B_1(Re_n)\backslash A }\frac{u(y)}{|x_0-y|^n}dy-C_n\int_{D}\frac{u(y)}{|x_0-y|^n}dy.
\end{eqnarray*}
Since $u>0$ in $D$ and in $B_1(x_0)\backslash A$, there holds
\begin{eqnarray}\label{est:maxp}
& &(-\Delta)^Lu(x_0)\\
\nonumber
&<& C_nP.V.\int_{A}\frac{u(x_0)-u(y)}{|x_0-y|^n}dy +C_n\int_{B_1(x_0)\backslash A}\frac{u(x_0)}{|x_0-y|^n}dy \\
\nonumber & &+C_n\int_{B_1(Re_n)\backslash A }\frac{u(x_0)-u(y)}{|x_0-y|^n}dy-C_n\int_{B_1(Re_n)\backslash A }\frac{u(x_0)}{|x_0-y|^n}dy\\
\nonumber &=& C_nP.V.\int_{B_1(Re_n)}\frac{u(x_0)-u(y)}{|x_0-y|^n}dy +C_n u(x_0)\left(\int_{B_1(x_0)\backslash A}\frac{1}{|x_0-y|^n}dy-\int_{B_1(Re_n)\backslash A }\frac{1}{|x_0-y|^n}dy\right)\\
\nonumber &\leq & C_n u(x_0)\left(\int_{B_1(x_0)\backslash A}\frac{1}{|x_0-y|^n}dy-\int_{B_1(Re_n)\backslash A }\frac{1}{|x_0-y|^n}dy\right),
\end{eqnarray}
where we have used \eqref{def:minp}.

Obviously, for those $y\in B_1(x_0)\backslash A$, $|x_0-y|<1$. Also, for $y\in B_1(Re_n)\backslash A$, $|x_0-y|>1.$ 
Hence, we can infer that 
\begin{equation}
	\label{est:vol}
	\int_{B_1(x_0)\backslash A}\frac{1}{|x_0-y|^n}dy-\int_{B_1(Re_n)\backslash A }\frac{1}{|x_0-y|^n}dy
	>|B_1(x_0)\backslash A|-|B_1(Re_n)\backslash A |=0.
\end{equation}

Using the fact that $\rho_n>0$ and putting \eqref{est:vol} into \eqref{est:maxp},
we arrive at $	\left((-\Delta)^L+\rho_n\right)u(x_0)\leq (-\Delta)^Lu(x_0)<0$,
which contradicts with \eqref{sec:twen}. 
\end{proof}

\begin{remark}
We note that the aforementioned maximum principle remains valid on a bounded domain $\Omega$, provided that the condition
$$
\int_{B_1(x)\backslash \Omega}\frac{1}{|x-y|^n}dy-\int_{\Omega\backslash B_1(x) }\frac{1}{|x-y|^n}dy+\rho_n\geq 0, \quad \forall x\in \Omega,
$$
holds, which is precisely Proposition 4.1 in \cite{CLL1}.
Additionally, the result holds for $u(x)$ satisfying
\begin{equation*}
	\left\{
	\begin{array}{ll}
		(-\Delta)^L u(x)\geq 0, & x\in B_1(Re_n), \\
		u(x)>0,& x\in \mathbb{R}^n\backslash B_1(Re_n).
	\end{array}
	\right.
\end{equation*}
The proofs for these assertions follow a similar reasoning as the one presented above.
\end{remark}

We shall now establish the proof of Theorem \ref{result:sum}, following the approach presented in \cite{CLL19}.
Consider $\phi(x)$ as the first eigenfunction associated with $L_\Delta$ in $B_1(Re_n)$, that is, the function defined as follows: 
\begin{equation*}
	\label{*}
	\left\{
	\begin{array}{ll}
		L_\Delta\phi(x)=\lambda_1\phi(x), & x\in B_1(Re_n), \\
		\phi(x)=0, & x\in B_1^c(Re_n).
	\end{array}
	\right.
\end{equation*}
According to Theorem 1.4 and Theorem 1.11 in \cite{CLL1}, we can deduce that $\phi(x)$ is strictly positive in $B_1(Re_n)$, $\lambda_1$ is positive, and $\phi(x)$ belongs to $C\big(\overline{B_1(Re_n)}\big)$.

\begin{proof}[Proof of Theorem \ref{result:sum}]
Assume for contradiction that $u\in L_0\cap C_{loc}^{1,1}(\Omega)\cap C(\bar\Omega)$ is a positive bounded solution of problem \eqref{sec:six}. By Theorem \ref{result:min}, $u(x)$ is monotone increasing in $x_n$ direction in $\Omega$.

It follows from that assumption that $\Omega$ is a Lipschitz coercive epigraph, we know that there is $R_1>0$ so that $B_1(R_1e_n)\subset \Omega$. 
Since $u(x)$ is positive in $\Omega$, 
$$
\xi_0:=\min_{x\in B_1(R_1e_1)}u(x)>0.
$$  
By the fact that $u(x)$ is monotone increasing in $x_n$ direction in $\Omega$, we get for all $R>R_1$, $u(x)>\xi_0$, $\forall x\in B_1(Re_n)$. Since $a(t)$ satisfies (i) and (iv), it is positive somewhere and nondecreasing. Hence, we choose $R>R_1$ sufficiently large such that $a(t)>0$ in $(R-1,+\infty)$.

Denote $m_0=\frac{f(\xi_0)}{\sup\limits_{R^{n}}u}>0.$
It follows from assumptions (i) and (iii) 
\begin{equation*}
L_\Delta u(x)=a(x_n)f(u(x))\geq a(R-1)f(\xi_0)\geq a(R-1)m_0u(x),\quad \forall x \in B_1(Re_n).
\end{equation*}

Taking assumption $(iv)$ into account, we have 
\begin{equation}\label{twy}
	L_\Delta u(x)\geq\lambda_1 u(x),\quad \forall x\in B_1(Re_n).
\end{equation}
for sufficiently large $R$.

Next we construct a auxiliary functions. Define
$$M=\max\limits_{x\in B_1(Re_n)}\frac{\phi}{u}(x),\quad \textrm{and} \quad v(x)=Mu(x)\in C_{loc}^{1,1}\cap L_0.$$
By using \eqref{twy} and a direct calculation,  we have for all $x\in B_1(Re_n),$
\begin{equation*}
	\begin{aligned}
		 L_\Delta v(x)&=M L_\Delta u(x)\geq M\lambda_1u(x)=\left(\max\limits_{x\in B_1(Re_n)}\frac{\phi}{u}(x)\right)\lambda_1u(x)\\
		&\geq \frac{\phi(x)}{u(x)}\lambda_1u(x)=\lambda_1\phi(x)=L_\Delta \phi(x),
	\end{aligned}
\end{equation*}
which implies
\begin{equation}
	\label{eq:comp}
	\left\{
	\begin{array}{ll}
		L_\Delta (v(x)-\phi(x))\geq0, & x\in B_1(Re_n), \\
		v(x)-\phi(x)>0,& x\in B_1^c(Re_n).
	\end{array}
	\right.
\end{equation}
Applying Proposition \ref{pro:one} to problem \eqref{eq:comp}, we  have
$$v(x)>\phi(x),\quad \forall x\in B_1(Re_n).$$
This contradicts the definition of $v,$ because at a maximum point $x^0,$ we have 
$$v(x^0)=\frac{\phi(x^0)}{u(x^0)}u(x^0)=\phi(x^0).$$
Therefore, equation \eqref{sec:six} does not possess any positive solution, and hence we complete the proof of Theorem \ref{result:sum}.
\end{proof}

\end{document}